\documentclass[letterpaper,11pt,reqno]{amsart}
\usepackage{amssymb, amsmath, amsfonts, amscd}
\usepackage{mathrsfs, latexsym}
\usepackage[all,ps,cmtip]{xy}
\usepackage{array, longtable}
\usepackage{bm, enumitem}
\usepackage{xcolor}
\usepackage{bm}
\usepackage{soul}
\usepackage{mathtools}

\usepackage{tikz-cd}
\usepackage{tikz}
\usepackage{graphicx}
\usepackage{calc}
\usepackage{accents} 
\allowdisplaybreaks
\usepackage{amsthm}
\usepackage{hyperref}

\numberwithin{equation}{section}

\newtheorem{theorem}{Theorem}[section]
\newtheorem{lemma}[theorem]{Lemma}

\newtheorem{proposition}[theorem]{Proposition}
\newtheorem{def-prop}[theorem]{Definition-Proposition}
\newtheorem{claim}[theorem]{Claim}

\theoremstyle{definition}
\newtheorem{definition}[theorem]{Definition}

\newtheorem{example}[theorem]{Example}

\newtheorem{remark}[theorem]{Remark}
\newtheorem{setting}[theorem]{Setting}
\newtheorem{op}[theorem]{Open Question}

\DeclareMathOperator{\Ivol}{Ivol}

\DeclareMathOperator{\Mov}{Mov}

\newcommand{\bQ}{{\mathbb Q}}

\title{Sharp Noether-type inequalities for $3$-folds of intermediate Kodaira dimensions}

\date{\today} 

\author{Meng Chen}
\address{\rm School of Mathematical Sciences \& Shanghai Center for Mathematical Sciences, Fudan University, Shanghai 200433, China}
\email{mchen@fudan.edu.cn}

\author{Yong Hu}
\address{\rm School of Mathematical Sciences, Shanghai Jiao Tong University, Shanghai 200240, China}
\email{yonghu@sjtu.edu.cn}

\author{Chen Jiang}
\address{\rm Shanghai Center for Mathematical Sciences \& School of Mathematical Sciences, Fudan University, Shanghai 200438, China}
\email{chenjiang@fudan.edu.cn}

\author{Yang Liu}
\address{\rm Shanghai Center for Mathematical Sciences, Fudan University, Shanghai 200438, China}
\email{23110840006@m.fudan.edu.cn}
\email{yangliu70223@gmail.com}

\begin{document}

\begin{abstract}
For a smooth projective $3$-fold $V$ of intermediate Kodaira dimension, we establish Noether-type inequalities relating the Iitaka volume $\operatorname{Ivol}(V)$ and the geometric genus $p_g(V)$. Specifically, for a smooth projective $3$-fold $V$ of Kodaira dimension $2$, we show that
$\operatorname{Ivol}(V)\geq \frac{3}{4}p_g(V)-\frac{5}{4};$
while if the Kodaira dimension of $V$ is $1$, then
$\operatorname{Ivol}(V)\geq p_g(V)-1.$
Both inequalities are sharp. In the Kodaira dimension $2$ case, we prove that the equality holds only if the bicanonical system of $V$ induces the Iitaka fibration and the canonical system of $V$ is composed with a pencil of surfaces whose general member has Iitaka volume $\frac12$ provided that $p_g(V)\geq 4$.
\end{abstract}

\keywords{minimal threefolds, Noether inequality, Kodaira dimension, Iitaka volume}
\subjclass[2020]{14J30, 14J10}

\maketitle

\pagestyle{myheadings}
\markboth{\hfill M. Chen, Y. Hu, C. Jiang, Y. Liu \hfill}{\hfill Sharp Noether-type inequalities for $3$-folds of intermediate Kodaira dimensions \hfill}

\section{Introduction}
Throughout this paper we work over the complex number field $\mathbb{C}$.

The classical Noether inequality lies at the heart of the birational classification of projective surfaces. Originally formulated by M. Noether (\cite{N1870}) in 1870, this fundamental result asserts that any minimal projective surface $S$ of general type satisfies the sharp inequality: $K_S^2 \geq 2p_g(S)-4$. As a powerful quantitative constraint linking two core birational invariants, this inequality serves as a crucial tool for the classification of surfaces of general type. To extend this result to broader geometric settings, Tsunoda and Zhang \cite{T-Z1}, together with Zhang’s further independent studies \cite{Z2}, established a series of Noether-type inequalities for non-complete algebraic surfaces, which generalize the classical Noether theory to algebraic surfaces of non-general type.

In higher-dimensional geometry, the generalization of Noether inequalities to threefolds has long been a central open problem. Very recently, Chen, Chen, and Jiang \cite{CCJ1, CCJ2}, as well as Chen, Hu, and Jiang \cite{CHJ25}, established the sharp inequality $\operatorname{vol}(V)\geq \frac{4}{3}p_g(V)-\frac{10}{3}$ for all smooth projective threefolds $V$ of general type. Nevertheless, the classification theory for threefolds of non-general type remains far from complete and largely unexplored. In particular, systematic Noether-type inequalities relating the fundamental birational invariants of threefolds are still missing, which hinders further progress in the geometric classification of non-general type threefolds. Focusing on smooth projective threefolds $V$ of intermediate Kodaira dimension, this paper aims to establish new sharp Noether-type inequalities connecting the Iitaka volume $\operatorname{Ivol}(V)$ (see \eqref{Ivol} for its definition) and the geometric genus $p_g(V)$.

Our main results are as follows. 

\begin{theorem}\label{thm:main1}
Let $V$ be a smooth projective $3$-fold with $\kappa(V)=2$. Then \[
\Ivol(V)\geq \frac{3}{4}p_g(V)-\frac{5}{4}.
\] 
\end{theorem}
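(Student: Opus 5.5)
We would begin by reducing to a good model. Replace $V$ by a minimal model $X$ with terminal singularities (good minimal models exist for $3$-folds), so $K_X$ is semiample and induces the Iitaka fibration $f\colon X\to Z$ onto a normal projective surface $Z$, whose general fiber $F$ is an elliptic curve. By Fujino--Mori / Kodaira's canonical bundle formula, $K_X\sim_{\mathbb Q} f^*(K_Z+B_Z+M_Z)=:f^*A$ with $A$ an ample $\mathbb Q$-divisor on $Z$. Then $\Ivol(V)=A^2$, and $p_g(V)=h^0(X,K_X)=h^0(Z,\lfloor A\rfloor')$, where the right-hand side is the appropriate round-down, i.e. $|K_X|$ is the pullback of a linear system $|L|$ on $Z$ with $L\le A$ as $\mathbb Q$-divisors. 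The goal becomes a Noether-type bound for the pair $(Z,A)$: $A^2\ge \frac34 h^0(Z,L)-\frac54$. We may assume $p_g(V)\ge 2$, since otherwise the inequality is trivial.

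Next we split according to the dimension of the image of $\Phi_{|L|}$ on (a resolution of) $Z$. \textbf{Case 1: $|L|$ is not composed with a pencil.} Then the movable part $M$ of $|L|$ on a resolution $\pi\colon Z'\to Z$ defines a generically finite map onto a surface in $\mathbb P^{p_g-1}$. Since that surface is nondegenerate, its degree is at least $p_g-2$, so $M^2\ge p_g-2$, and $A^2\ge (\pi^*A)^2\ge M^2\ge p_g-2$. It remains to check $p_g-2\ge \frac34p_g-\frac54$, i.e. $p_g\ge 3$, which holds here. \textbf{Case 2: $|L|$ is composed with a pencil.} Then $M\equiv a C$ with $a\ge p_g-1$ (a rational pencil; an irrational pencil gives an even better bound), and $C$ a general member, a curve on $Z'$. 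We get $A^2\ge a\,(\pi^*A\cdot C)\ge (p_g-1)(\pi^*A\cdot C)$, so we need a lower bound $\pi^*A\cdot C\ge \frac34$ asymptotically. Moreover, $\pi^*A\cdot C=\Ivol(S)$, where $S=f^{-1}(C)$ is the corresponding surface with $\kappa=1$, whose Iitaka volume is $\deg(K_{S}$ restricted$)$. Using the canonical bundle formula on $C$ with standard coefficients $1-1/m$ or $\{1/6,1/4,1/3,\dots\}$ from $M_Z$, one shows $\deg(A|_C)\ge \frac12$, with the minimal value $\frac12$ occurring in specific configurations.

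\textbf{Main obstacle.} The crude bound $A^2\ge \frac12(p_g-1)$ is insufficient, so the key step is to improve it to $\frac34p_g-\frac54$. The plan is to write $\pi^*A=aC+E_\pi$, where $E_\pi\ge0$ contains the fixed part and fractional parts, and to estimate
\[A^2=a\,\pi^*A\cdot C+\pi^*A\cdot E_\pi .\]
When $\pi^*A\cdot C\ge 1$, the bound follows at once. When $\pi^*A\cdot C\in\{\frac12,\frac23,\frac34,\dots\}$ is small, we use the fact that $2A$ (the bicanonical system) must be big on $Z$: if $h^0(2K_X)$ were still composed with the same pencil, then $C^2=0$ would contradict the ampleness of $A$, or else force $\pi^*A\cdot E_\pi$ to be large. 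Concretely, we would use Riemann--Roch / Kawamata--Viehweg vanishing on $Z'$ to get $h^0(2A)$ from $h^0$ restrictions to $C$, and then extract an extra contribution $\pi^*A\cdot E_\pi\ge \frac14(p_g-1)-\frac12$-type estimate, by comparing $|2K|$ with $2|K|$ restricted to $C$ of degree $\frac12$. This gives $A^2\ge \frac12(p_g-1)+\frac14(p_g-1)-\frac12=\frac34p_g-\frac54$. We expect this comparison, which requires a careful case analysis of the Iitaka volume $\frac12,\frac23,\dots$ of $S$, to be the hardest part.
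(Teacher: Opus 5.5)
\textbf{There is a genuine gap in Case~2.} Your outer skeleton agrees with the paper's: pass to a minimal model, split according to whether $|K_X|$ is composed with a pencil, use the degree bound for nondegenerate surfaces, and bring in $|2K_X|$ when the fibre surface has small Iitaka volume. Your Case~1 is exactly Theorem~\ref{thm:Noehter not pencil}.

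\textbf{First gap: the identity $\pi^*A\cdot C=\Ivol(S)$ is false in general.} On a resolution one has $K_{X'}=\phi^*K_X+E_\phi$ with $E_\phi\ge 0$, so $\phi^*K_X|_G=K_G-E_\phi|_G$. Since the left side is nef, this only gives $\pi^*A\cdot C\le \Ivol(G)$. When the canonical pencil has base points, $E_\phi|_G$ typically contains whole fibres of $G\to C$, and the inequality is then strict. What you need is an inequality in the opposite direction. This is a real step, and the canonical bundle formula on $C$ does not supply it. The paper proves only $p_g\,\phi^*K_X|_G\ge (p_g-1)\sigma^*K_{G_0}$ for a rational pencil (Lemma~\ref{lem:compare KX KG}, using the terminal modification of \cite{CHJ25}). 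This yields $\Ivol(X)\ge \frac{(p_g-1)^2}{p_g}\Ivol(G)$, not your $(p_g-1)\Ivol(G)$. It also needs $p_g(G)>0$ to get $\Ivol(G)\ge\frac12$.

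\textbf{Second gap: the improvement to $\frac34p_g-\frac54$ is not actually supplied.} Ampleness of $A$ does not prevent the moving part of $|2K_X|$ from being a pencil, because the positivity can sit in the fixed part. In Example~\ref{example: pencil}, $A$ is ample yet $|K_X|$ is a pencil. The paper cannot exclude a bicanonical pencil when $\Ivol(G)\ge\frac23$; it handles that case via Proposition~\ref{prop:Noehter pencil} with $m=2$. Your estimate $\pi^*A\cdot E_\pi\ge\frac14(p_g-1)-\frac12$ is not derived from anything.

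The ingredients that actually close the argument are these:
\begin{enumerate}
\item[(a)] If $\Ivol(G)<1$ and $p_g(G)>0$, then $p_g(G)=1$, $q(G)=0$, and $\Ivol(G)=1-\frac1{m_1}$ with a unique multiple fibre.
\item[(b)] The plurigenus bound $h^0(X,2K_X)\ge 3p_g(X)-3$, proved on the threefold. It combines $h^2(X,2K_X)=0$, Reid's Riemann--Roch with $K_X^3=0$ (so $\chi(2K_X)\ge 3\chi(\omega_X)$), and Koll\'ar's decomposition giving $\chi(\omega_X)=\deg g_*\omega_{X'}\ge p_g-1$, which uses (a).
\item[(c)] When $\Ivol(G)<\frac23$, one has $K_{G_0}\sim R$ with $2R$ the unique multiple fibre. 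The injectivity theorem of \cite{KMM87} plus a multiplier-ideal computation then shows that $|2K_X|$ restricts to a nontrivial moving system on $G$, so $|2K_X|$ is not a pencil.
\end{enumerate}
With these, $4\Ivol(X)\ge h^0(X,2K_X)-2\ge 3p_g-5$ follows directly from the nondegenerate-surface bound. No decomposition $A^2=a\,\pi^*A\cdot C+\pi^*A\cdot E_\pi$ is needed. Riemann--Roch on the surface $Z'$ plus restriction to $C$ does not visibly produce (b), and (b) is exactly where the constant $\frac34$ comes from.
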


\begin{theorem}\label{thm:main2}
Let $V$ be a smooth projective variety with $\kappa(V)=1$. 
Then \[
\Ivol(V)\geq p_g(V)-1.
\]
\end{theorem}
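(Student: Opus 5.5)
Theorem~\ref{thm:main2} is stated for a smooth projective variety of any dimension, so the plan is to use the Iitaka fibration to reduce to a degree computation on a curve. After replacing $V$ by a birational model (which changes neither $p_g$ nor $\Ivol$), I assume the Iitaka fibration is a morphism $f\colon V\to C$ onto a smooth projective curve $C$ whose general fiber $F$ has $\kappa(F)=0$. By the canonical bundle formula of Fujino--Mori, after enlarging the index suitably there are an integer $m$ and a $\bQ$-divisor $M_C+B_C$ on $C$ with
\[
H^0(V,kmK_V)\cong H^0(C,\lfloor km(K_C+B_C+M_C)\rfloor)
\]
for all $k\ge 1$. Here $B_C\ge 0$ is the discriminant and $M_C$ is the nef moduli part. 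Since $C$ is a curve, $\Ivol(V)$ is exactly $\deg(K_C+B_C+M_C)$.

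The first key step compares $p_g(V)$ with sections on $C$. Let $g$ be the genus of $C$. Since $\kappa(F)=0$, the restriction $|K_V|\,|_F$ contains at most one divisor, namely the unique effective member of $|K_F|$ when $p_g(F)=1$. Hence $|K_V|$ is composed with the fibration and
\[
K_V \ge f^*D + E
\]
for some divisor $D$ on $C$ with $h^0(C,D)=p_g(V)$ and some effective divisor $E$. Taking sections of multiples then gives
\[
\lfloor k(K_C+B_C+M_C)\rfloor \ge kD
\]
as divisors up to linear equivalence. Concretely, $f_*\omega_V$ has rank $p_g(F)\le 1$; if it has rank $1$, $f_*\omega_V=\mathcal O_C(D)$ and the inclusion $f_*\omega_V^{\otimes k}\supseteq (f_*\omega_V)^{\otimes k}$ gives the displayed inequality. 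Dividing by $k$ and letting $k\to\infty$ yields $\Ivol(V)\ge \deg D$.

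The second step bounds $\deg D$. I may assume $p_g(V)\ge 2$, since otherwise the statement is trivial because $\Ivol(V)>0$. Riemann--Roch and Clifford on $C$ give
\[
h^0(D)\le \deg D+1 \quad\text{if } g=0,
\]
which already yields $\Ivol\ge p_g-1$. For $g\ge 1$ the corresponding bound is
\[
h^0(D)\le \max\{\deg D-g+1,\ \deg D/2+1\}\le \deg D .
\]
So $\Ivol\ge p_g$ in that case, which is even better.

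I expect the main obstacle to be the first step. It requires care when $p_g(F)=0$, where $f_*\omega_V=0$; but then $p_g(V)=0$ and the claim is trivial. It also requires justifying $\Ivol=\deg(K_C+B_C+M_C)$, including the rounding, since the $\lfloor\cdot\rfloor$ could in principle lose degree. To handle this I would work directly with $\Ivol(V)=\lim h^0(kK_V)/k$ and compare with $h^0(C,kD)\ge k\deg D-g+1$, which only needs the injection $H^0(C,kD)\hookrightarrow H^0(V,kK_V)$ given by multiplying sections. This bypasses the canonical bundle formula entirely: $\Ivol(V)\ge\deg D$ follows immediately, and Clifford then finishes the argument.
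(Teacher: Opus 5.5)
Your proposal is correct and is essentially the paper's argument. The paper reduces to a curve carrying the canonical system in the same way, except that it works on the canonical image $T\subseteq\mathbb{P}^{p_g(V)-1}$ rather than on the base $C$ of the Iitaka fibration. It notes $\dim T=1$ because $\kappa(V)=1$, uses $h^0(V,mK_V)\geq h^0(T,\mathcal{O}_T(m))$, and gets $\deg T\geq p_g(V)-1$ from non-degeneracy, where you use $f_*\omega_V=\mathcal{O}_C(D)$ and $\deg D\geq h^0(C,D)-1$.

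Your final paragraph's injection $H^0(C,kD)\hookrightarrow H^0(V,kK_V)$ is all you need. The Fujino--Mori detour and the Clifford case split can be dropped, since $h^0(C,D)\leq \deg D+1$ already holds in every genus.
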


Both inequalities are optimal 
by Example~\ref{example: pencil} and Example~\ref{example: kodaira dim 1}.
These two results fill the gap in Noether-type quantitative bounds for threefolds with intermediate Kodaira dimensions $\kappa=1,2$. Furthermore, the newly derived inequalities relating the Iitaka volume and geometric genus offer novel effective quantitative criteria and a rigorous analytical framework for explicit birational classification of threefolds of intermediate Kodaira dimensions.

We can also describe the equality case of Theorem~\ref{thm:main1}. 

\begin{theorem}\label{thm:equality case}
Let $X$ be a minimal projective $3$-fold with $\kappa(X)=2$ such that $p_g(X)\geq 2$ and \[
\Ivol(X)= \frac{3}{4}p_g(X)-\frac{5}{4}.
\] 
Then one of the following statements holds:
\begin{enumerate}

 \item $p_g(X)=3$ and $|K_X|$ is base point free and induces a morphism \[\Phi_{|K_X|}: X\to \mathbb{P}^{2}\] which coincides with the Iitaka fibration of $X$;

 \item $p_g(X)\geq 3$ and the following properties hold:
\begin{enumerate}
\item $X$ is Gorenstein and $h^1(X, \mathcal{O}_{X})=h^2(X, \mathcal{O}_{X})=0$;
\item $|2K_X|$ is base point free with $h^0(X, 2K_X)=3p_g(X)-3$ and induces a morphism \[\Phi_{|2K_X|}: X\to T_2\subseteq \mathbb{P}^{3p_g(X)-4}\] where $T_2$ is a surface of minimal degree $\deg T_2=3p_g(X)-5$ and $\Phi_{|2K_X|}$ coincides with the Iitaka fibration of $X$;

\item $|K_X|$ is composed with a pencil of surfaces $G$ with \[\Ivol(G)=\begin{cases}\frac{1}{2}  \text{ or } \frac34 &\text{if } p_g(X)=3;\\
\frac{1}{2} &\text{if } p_g(X)\geq 4.
\end{cases}\]
\end{enumerate}
\end{enumerate}
\end{theorem}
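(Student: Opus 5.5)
The plan is to run the proof of Theorem~\ref{thm:main1} again and check which inequalities must be equalities when $\Ivol(X)=\frac34p_g(X)-\frac54$. Let $f\colon X\dashrightarrow Z$ be the Iitaka fibration, with $Z$ a surface and general fiber an elliptic curve. On a suitable birational model, $K_X\sim_{\mathbb Q} f^*(K_Z+B_Z+M_Z)$ by the canonical bundle formula, and $\Ivol(X)=(K_Z+B_Z+M_Z)^2$. The proof of Theorem~\ref{thm:main1} splits according to the canonical map $\Phi_{|K_X|}$: either its image is a surface, or $|K_X|$ is composed with a pencil. In each case one obtains a chain of inequalities ending in $\frac34p_g-\frac54$, and I would track which links must become equalities.

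\emph{Case 1: the image of $\Phi_{|K_X|}$ is a surface.} Then $|K_X|$ is pulled back from a movable system on $Z$ (after resolution), so $\Ivol(X)\ge \deg\Phi\cdot\deg(\text{image})\ge p_g-2$. Comparing with $\frac34p_g-\frac54$ forces $p_g\le 3$, hence $p_g=3$. Equality then forces the image to be $\mathbb P^2$, the map $Z\to\mathbb P^2$ to be birational, and $\Ivol=1$. It remains to get base point freeness and to show that $\Phi_{|K_X|}$ is the Iitaka fibration. For this I would argue that the movable part of $f^*L$ with $L^2=1$ exhausts $K_X$. Any fixed part would contribute positively via the Nakai-type estimate $(K_Z+B+M)\cdot L\ge L^2$ with equality only if there is no fixed part. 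This gives statement (1).

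\emph{Case 2: $|K_X|$ is composed with a pencil.} Let $G$ be a general member; it is a surface with $\kappa(G)=1$ fibred by the elliptic fibers. We have $K_X\ge (p_g-1)G$ numerically, modulo the rational versus irrational pencil distinction. The proof of Theorem~\ref{thm:main1} uses a bound of the form $\Ivol(X)\ge (p_g-1)\cdot\Ivol(G)\cdot(\text{something})$, combined with the bicanonical system $|2K_X|$ through $h^0(2K_X)$. I would expect it to derive the estimate
\[
\Ivol(X)\ge \tfrac{1}{4}\bigl(h^0(2K_X)-2\bigr)
\]
from the fact that $\Phi_{|2K_X|}$ has image a surface of degree at least $h^0(2K_X)-2$ in $\mathbb P^{h^0(2K_X)-1}$, with $4\Ivol=(2K)^2$-type volume. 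Combined with a Riemann--Roch/Reid plurigenus formula
\[
h^0(2K_X)\ge 3p_g-3+(\text{nonnegative terms from } \chi(\mathcal O_X) \text{ and the singularity basket}),
\]
this yields the inequality. Equality then forces the following:
\begin{itemize}
\item the basket is empty, so $X$ is Gorenstein;
\item $q=h^2(\mathcal O_X)=0$, which gives (a);
\item $h^0(2K_X)=3p_g-3$;
\item $\Phi_{|2K_X|}$ is birational onto its image when restricted to $Z$, with image of minimal degree $3p_g-5$, and $|2K_X|$ has no fixed or base part, which gives (b).
\end{itemize}
For (c), restrict to $G$: we have $K_X|_G\ge$ a multiple of the elliptic fibration class on $G$, and equality in $\Ivol(X)\ge c(p_g-1)\Ivol(G)$ pins down $\Ivol(G)\in\{\frac12,\frac34\}$. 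For $p_g\ge4$, the value $\frac34$ would violate the inequality, leaving only $\frac12$. The case $p_g=2$ should be excluded by a separate small computation, since then $\frac34p_g-\frac54=\frac14$ and this is below the known minimal Iitaka volume in the pencil case.

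The main obstacle is the rigidity step in (b): extracting base point freeness of $|2K_X|$ and the identification of $\Phi_{|2K_X|}$ with the Iitaka fibration from equality in the degree bound. The first thing I would try is to show that the bicanonical image surface $T_2$ has degree exactly $(2K_Z+2B+2M)^2$. This forces the map $Z\to T_2$ to be birational and forces $2(K_Z+B_Z+M_Z)$ to equal the movable part with zero base locus. Base point freeness on $X$ would then follow by pulling back via $f$ and using that $X$ is minimal.
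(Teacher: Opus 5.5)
Your Case~1 and your outline of (b) are essentially the paper's argument: $m\phi'^*L-L'_m$ is $\mathbb{Q}$-effective, equal self-intersections plus the Hodge index theorem force $\mathbb{Q}$-linear equivalence, and pulling back shows the fixed part vanishes. The genuine gap is in (c). You propose that equality in a bound $\Ivol(X)\ge c\,(p_g-1)\Ivol(G)$ pins down $\Ivol(G)$, and this fails. For $p_g(X)\ge 3$ the equality case is carried by the bicanonical chain $4\Ivol(X)\ge (L_2')^2\ge \deg T_2\ge h^0(X,2K_X)-2\ge 3p_g(X)-5$. The bound $\Ivol(X)\ge \frac{(p_g-1)^2}{p_g}\Ivol(G)$ of Proposition~\ref{prop:Noehter pencil} need not be an equality there: in Example~\ref{example: pencil} with $e=2$ one has $\Ivol(X)=1$ but $\frac{(p_g-1)^2}{p_g}\Ivol(G)=\frac23$. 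Used only as an inequality with $\Ivol(G)=1-\frac{1}{m_1}$, it allows $m_1\in\{2,3,4\}$ for every $p_g\ge 3$. So it excludes neither $\Ivol(G)=\frac23$ nor, when $p_g\ge 4$, $\Ivol(G)=\frac34$.

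Three ingredients are missing.
\begin{enumerate}
\item Once (b) holds, $\phi^*(2K_X)|_G$ is free and induces the elliptic fibration of $G$ over $\mathbb{P}^1$, so $\phi^*(2K_X)|_G\sim\sigma^*(km_1R)$. Since $\phi^*(2K_X)|_G\le\sigma^*(2K_{G_0})\sim\sigma^*((2m_1-2)R)$ by the Zariski decomposition, $k=1$.
\item As $X$ is Gorenstein, $\sigma_*(\phi^*D|_G)$ is an integral divisor for $D\in|K_X|$. Then $2\sigma_*(\phi^*D|_G)\in|m_1R|$ forces $2\sigma_*(\phi^*D|_G)=m_1R$, so $m_1$ is even; this is what kills $\frac23$.
\item The sharp restriction inequality $p_g\,\phi^*K_X|_G\ge(p_g-1)\sigma^*K_{G_0}$ of Lemma~\ref{lem:compare KX KG}(3) yields $\frac{m_1}{2}\ge\frac{p_g-1}{p_g}(m_1-1)$. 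Hence $m_1\le 4$ for $p_g=3$ and $m_1\le 3$ for $p_g\ge 4$.
\end{enumerate}

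The exclusion of $p_g=2$ is also not a comparison with a known minimum. The pencil argument only gives $\Ivol(X)\ge\frac14$, and showing this is not attained takes real work.
\begin{enumerate}
\item One first gets $\Ivol(G)=\frac12$.
\item $|2K_X|$ must be composed with the same pencil; otherwise $h^0(X,2K_X)\ge4$ and the bicanonical chain gives $\Ivol(X)\ge\frac12$.
\item Proposition~\ref{prop:Noehter pencil} with $m=2$ then forces $h^0(X,2K_X)=3=3p_g-3$, so $X$ is Gorenstein.
\item Equality for $m=1$ gives $\phi^*K_X|_G\sim_{\bQ}\frac12\sigma^*R$, with $R$ the reduced curve in $|K_{G_0}|$.
\item Together with $2\phi^*D|_G\ge\sigma^*R$ from Lemma~\ref{lem:compare KX KG}(3), this gives $2\phi^*D|_G=\sigma^*R$, impossible for $D$ Cartier and $R$ reduced.
\end{enumerate}

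Two further steps in your pencil case are unsupported.
\begin{enumerate}
\item You assume $\Phi_{|2K_X|}$ has surface image. This is exactly Lemma~\ref{lem: 2K not pencil}, proved via the injectivity theorem and $\Ivol(G)<\frac23$; the remaining case is excluded by Proposition~\ref{prop:Noehter pencil} for $m=2$.
\item In $3\chi(\omega_X)=3p_g-3+3\bigl(h^1(\mathcal{O}_X)-h^2(\mathcal{O}_X)\bigr)$ the last term is not a priori nonnegative. One needs $h^2(X,2K_X)=0$, and $\chi(\omega_X)=\deg g_*\omega_{X'}\ge p_g-1$ from Koll\'ar's theorems using $p_g(G)=1$ and $h^1(G,\mathcal{O}_G)=0$. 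Equality then gives only $h^1(\mathcal{O}_X)=h^2(\mathcal{O}_X)$; getting $h^1(\mathcal{O}_X)=0$ needs a separate Leray argument with $p_g(\Gamma)=0$.
\end{enumerate}
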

We may compare Theorem~\ref{thm:equality case} with the corresponding results for minimal threefolds of general type. Let $Y$ be a minimal threefold of general type on the Noether line with $p_g(Y)\geq 5$. 
\begin{enumerate}
 \item Hu and Zhang \cite{HZ} proved that $Y$ is Gorenstein, $h^1(Y,\mathcal{O}_Y)=h^2(Y, \mathcal{O}_Y)=0$, and the canonical image of $Y$ is a surface of minimal degree;
 \item Coughlan, Hu, Pignatelli, and Zhang \cite{CHPZ} proved that $Y$ birationally admits a simple fibration in $(1,2)$-surfaces over $\mathbb{P}^1$.
\end{enumerate}
Theorem~\ref{thm:equality case} show that similar results hold for minimal $3$-folds with $\kappa=2$. 

The proof of Theorem~\ref{thm:main2} is quite standard and may be well-known to experts, while Theorem~\ref{thm:main1} is much harder. We briefly explain the difficulties and ingredients in the proof of Theorem~\ref{thm:main1}.
According to the minimal model program, we may replace $V$ by its minimal model $X$. Let us consider the most difficult case that the canonical map $\Phi_{|K_X|}$ is composed with a pencil.
One must overcome at least three essential technical obstacles. First, it is necessary to establish a comparison inequality between $\phi^*K_X|_G$ and $\sigma^*K_{G_0}$ of the form
$\phi^*K_X|_G\geq c\,\sigma^*K_{G_0}$
for some constant $c>0$, where $\phi:X'\rightarrow X$ denotes a birational modification, $g:X'\rightarrow \Gamma$ is a fibration over a smooth curve, $G$ stands for a general fiber of $g$, and $\sigma:G\rightarrow G_0$ is the contraction onto the minimal model of $G$. Although $K_X$ is not big, the terminal model construction for the canonical pencil of $X$ developed by Chen, Hu, and Jiang \cite[\S 3]{CHJ25} remains valid in this setting. The second difficulty arises from the lack of effective vanishing theorems when analyzing the linear series $|mK_X|$, especially concerning its restriction $|mK_X||_G$. To resolve this issue, we apply the injectivity theorem from \cite[Theorem~6-1-6]{KMM87}. 
Thirdly, even if the comparison inequality between $\phi^*K_X|_G$ and $\sigma^*K_{G_0}$ is established, it can not give the optimal Noether-type inequality when the surface $G$ has small Iitaka volume. To resolve this issue, first we  
give a precise description of elliptic surfaces with $\kappa=1$, $p_g>0$, and extremely small Iitaka volumes, then we show that if the canonical map is fibered by such surfaces, then the bi-canonical system $|2K_X|$ behaves better and sharpens the estimate of the Iitaka volume of $X$.
These ingredients together enable us to establish the sharp Noether-type inequality in dimension three.

\section{Preliminaries}

\subsection{Notation and conventions}
We adopt the standard notation and definitions from \cite{KMM87, Kollar-Mori-1998, Kawamata_2024} and use them freely.

A $\mathbb{Q}$-divisor $D$ on a normal variety $X$ is said to be {\it $\mathbb{Q}$-effective} if it is $\mathbb{Q}$-linearly equivalent to an effective $\mathbb{Q}$-divisor.

A {\it fibration} is a projective morphism $f:X\to Z$ between normal varieties such that $f_*\mathcal{O}_X=\mathcal{O}_Z$. In particular, $f$ is surjective with connected fibers.

\subsection{Iitaka volume and Iitaka fibration}
We recall basic definitions of
 Kodaira dimension, Iitaka volume, and Iitaka fibration. 
 
\begin{definition}
Let $X$ be a normal projective variety with at worst terminal singularities.
\begin{enumerate}
 \item The {\it geometric genus} of $X$ is defined as 
\[
p_g(X)\coloneqq h^0(X,K_X).
\]
\item The {\it Kodaira dimension} $\kappa(X)$ of $X$ is defined as follows: if $|mK_X|\neq \emptyset$ for some positive integer $m$, then 
\begin{equation*}
 \kappa(X)\coloneqq 
 \max \{\dim\Phi_{|mK_X|}(X)\mid m\in\mathbb{Z}_{\geq1}\},
\end{equation*}
where $\Phi_{|mK_X|}$ is the rational map defined by $|mK_X|$; otherwise, $\kappa(X)\coloneqq -\infty$. 
\end{enumerate}

\end{definition}

\begin{definition}Let $X$ be a normal projective variety with at worst terminal singularities and with $\kappa(X)=\kappa> 0$.
The {\it Iitaka volume} $\Ivol(X)$ of $X$ is defined as
\begin{equation}
\Ivol(X)=\limsup_{m\to +\infty} \frac{h^0(X,mK_X)}{m^{\kappa}/\kappa!}. \label{Ivol}
\end{equation}
\end{definition}

It is well-known that
for a normal projective variety $X$ with at worst terminal singularities, $p_g(X), \kappa(X)$, and $\Ivol(X)$ are all birational invariants. So we are free to replace $X$ by any of its birational models. 
According to the minimal model program, every smooth projective $3$-fold $V$ with $\kappa(V)\geq 0$ is birational to a {\it minimal} projective $3$-fold $V_{\min}$ such that $V_{\min}$ has $\mathbb{Q}$-factorial terminal singularities and $K_{V_{\min}}$ is semiample (see \cite{Kaw92}).

\begin{definition}
 Let $X$ be a normal projective variety with at worst terminal singularities such that $\kappa(X)> 0$ and $K_X$ is semiample. Then there is a fibration 
\[
f:X\to Z
\]
and 
an ample $\mathbb{Q}$-divisor $L$ on $Z$ such that $K_X\sim_{\mathbb{Q}}f^*L$. In this case,
we have $\dim Z=\kappa(X)$ and
\[
\Ivol(X)=L^{\dim Z}.
\]
The fibration $f:X\to Z$ is called the {\it Iitaka fibration} of $X$. 
\end{definition}

\subsection{Pluricanonical maps}
We will often use the following setting (especially for $m=1$) in the study of pluricanonical maps of $3$-folds.
\begin{setting}\label{setting}Let $X$ be a minimal projective $3$-fold with 
$\kappa(X)>0$ and let $f:X\to Z$ be the Iitaka fibration with $K_X\sim_{\mathbb{Q}} f^*L$ for some ample $\mathbb{Q}$-divisor $L$ on $Z$. 
For a positive integer $m$ with $h^0(X, mK_X)\geq 2$,
consider the $m$-canonical map 
\[
\Phi_{|mK_X|}:X\dashrightarrow T_m\subseteq \mathbb{P}^{h^0(X, mK_X)-1}
\]
defined by the linear system $|mK_X|$, where $T_m$ is the closure of its image.
As $mK_X$ is numerically trivial and effective on a general fiber of $f$, it is linearly trivial on a general fiber of $f$. Hence
$\Phi_{|mK_X|}$ factors through $f$. We will construct a commutative diagram as follows: 
\begin{equation*}
\begin{tikzcd}
X' \arrow[r, "f'"] \arrow[d, "\phi"'] \arrow[rr, "g_m", bend left, shift left] & Z' \arrow[d, "\phi'"'] \arrow[r, "g'_m"] 
& \Gamma_m \arrow[d, "s_m"] & \\
X \arrow[r, "f"] \arrow[rr, "\Phi_{|mK_X|}", dashed, bend right, shift right] & Z \arrow[r, dashed] & T_m \arrow[r, hook] & \mathbb{P}^{h^0(X, mK_X)-1}
\end{tikzcd}
\end{equation*}

Let $\phi':Z'\to Z$ be a resolution of $Z$ resolving the natural rational map
$Z\dashrightarrow T_m$ and let $Z'\xrightarrow{g'_m}\Gamma_m\xrightarrow{s_m} T_m$ be the Stein factorization. Let $\phi:X'\to X$ be a resolution of $X$ such that 
there is a morphism $f':X'\to Z'$ with $f\circ \phi= \phi'\circ f'$. Denote by $g_m: X'\to \Gamma_m$ the natural morphism.
Take $L'_m$ to be a general member in 
$g'^*_ms_m^*|\mathcal{O}_{T_m}(1)|$. 
Then we have
\begin{align}\label{eq:f*L=Mov}
|f'^*L'_m|=\Mov|\lfloor\phi^*(mK_X)\rfloor|
\end{align}
since the natural morphism $X'\to T_m$ is defined by $\Mov|\lfloor\phi^*(mK_X)\rfloor|$. 

We will always omit the subscript $m$ when $m=1$.
\end{setting}

\begin{definition}\label{def:pencil}
 Keep the notation in Setting~\ref{setting}. We say that $|mK_X|$ is {\it composed with a pencil of surfaces} if $\dim T_m=1$ (or equivalently, $\dim \Gamma_m=1$). Sometimes we will say that $|mK_X|$ is {\it composed with a pencil of surfaces $G$}, where $G$ is a general fiber of $g_m: X'\to \Gamma_m$. 
\end{definition}

\begin{remark}\label{rem:pencil}
 In the setting of Definition~\ref{def:pencil}, we have the following useful facts:
 \begin{enumerate}
 \item $G$ is a smooth projective surface with $\kappa(G)=\kappa(X)-1$ by \cite[Proposition~1]{Fuj77}. 
 
 \item If $M_m\in \Mov|\lfloor\phi^*(mK_X)\rfloor|$, then we
 may write $M_m\equiv aG$ for some integer \[a\geq h^0(X, mK_X)-1,\] and the equality holds if and only if $p_g(\Gamma_m)=0$. 
 
 Indeed, $\Mov|\lfloor\phi^*(mK_X)\rfloor|=|g_m^*L_{\Gamma_m}|$ where $L_{\Gamma_m}$ is a base-point free divisor on $\Gamma_m$ with $h^0(\Gamma_m, L_{\Gamma_m})=h^0(X, mK_X)$, which implies that \[a=\deg L_{\Gamma_m}\geq h^0(\Gamma_m, L_{\Gamma_m})-1=h^0(X, mK_X)-1,\]
 where the inequality follows from
 Lemma~\ref{lem:deg>=h0-1} below and the equality holds if and only if $p_g(\Gamma_m)=0$. 
 \end{enumerate} 
\end{remark}
\begin{lemma}\label{lem:deg>=h0-1}
 Let $\mathcal{L}$ be a line bundle on a smooth projective curve $C$ with $h^0(C, \mathcal{L})\geq 2$. Then $\deg\mathcal{L}\geq h^0(C, \mathcal{L})-1,$ and the equality holds if and only if $p_g(C)=0$. 
\end{lemma}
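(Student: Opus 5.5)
We will prove the inequality by Riemann--Roch, then treat the equality case separately using Clifford's theorem together with the standard genus-zero fact.

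Let $g=p_g(C)$ and $r+1=h^0(C,\mathcal{L})\geq 2$, $d=\deg\mathcal{L}$. By Riemann--Roch,
\[
h^0(C,\mathcal{L})-h^0(C,K_C-\mathcal{L})=d+1-g .
\]
This gives the case split.

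\emph{Case $h^0(C,K_C-\mathcal{L})=0$.} Then $d=r+g\geq r$, with equality exactly when $g=0$.

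\emph{Case $h^0(C,K_C-\mathcal{L})>0$.} Then $\mathcal{L}$ is special, and Clifford's theorem gives $d\geq 2r$. Since $r\geq 1$, we get $d\geq 2r>r$, so this case never attains equality. Note also that this case forces $g\geq 1$, because on $\mathbb{P}^1$ the canonical bundle has negative degree and $K_C-\mathcal{L}$ has no sections.

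Combining the two cases gives $d\geq r=h^0(C,\mathcal{L})-1$ in general.

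\emph{Equality.} If $g=0$, then $C\cong\mathbb{P}^1$ and $\mathcal{L}=\mathcal{O}(d)$ with $h^0=d+1$, so equality holds. Conversely, suppose equality holds. By the case analysis we must be in the first case with $g=0$.

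An alternative route to the inequality avoids Clifford. Choose a general effective divisor $D\in|\mathcal{L}|$ and impose vanishing at $r$ general points; this drops $h^0$ by exactly one each time, which gives $d\geq r$. If $d=r$, the linear system $|\mathcal{L}|$ then gives a degree-one map onto a rational normal curve, which forces $C\cong\mathbb{P}^1$. We prefer the Riemann--Roch/Clifford argument because it handles equality cleanly.

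The only mildly delicate point is the strictness in the special case. This needs $r\geq 1$, which is exactly the hypothesis $h^0(C,\mathcal{L})\geq 2$.
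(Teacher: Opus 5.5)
Your proof is correct, but it handles the key step differently from the paper. Both start from Riemann--Roch. The paper writes it as
\[
\deg\mathcal{L}=h^0(C,\mathcal{L})-1+\bigl(h^0(C,\omega_C)-h^0(C,\omega_C\otimes\mathcal{L}^{-1})\bigr)
\]
and does not split into special and non-special cases. It bounds $h^0(C,\omega_C\otimes\mathcal{L}^{-1})\le h^0(C,\omega_C)$ directly, since multiplying by a nonzero section of $\mathcal{L}$ is injective. For the equality case it uses that $|\omega_C|$ is base-point free when $p_g(C)>0$. Then not every canonical section can vanish along a divisor in $|\mathcal{L}|$, which has positive degree, so the inequality is strict.

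You instead use Clifford's theorem in the special case. Clifford's proof rests on the same additivity $\dim|D|+\dim|K_C-D|\le\dim|K_C|$, so your route wraps the paper's elementary input inside a stronger named theorem. What you gain is the quantitative gap $d\ge 2r$ in the special case, which makes strictness automatic and avoids base-point-freeness of the canonical system. The paper's argument is shorter and self-contained.

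Your ``alternative route'' paragraph is not needed. If you keep it, the claim that $d=r$ forces a birational map onto a rational normal curve needs one more fact: the image is a nondegenerate irreducible curve in $\mathbb{P}^r$, so it has degree at least $r$, while (map degree)$\times$(image degree)$\le d$.
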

\begin{proof}
 By the Riemann--Roch formula, we have
 \[
 \deg \mathcal{L}=h^0(C, \mathcal{L})-h^0(C, \mathcal{L}^{-1}\otimes \omega_C)+h^0(C, \omega_C)-1.
 \]
 Then the inequality follows from the fact that $h^0(C, \omega_C)\geq h^0(C, \mathcal{L}^{-1}\otimes \omega_C)$ as $h^0(C, \mathcal{L})>0$. Recall that $|\omega_C|$ is base-point free if $p_g(C)>0$, so the equality holds if and only if $h^0(C, \omega_C)= h^0(C, \mathcal{L}^{-1}\otimes \omega_C)$, if and only if $p_g(C)=0$.
\end{proof}

We will often use the following lemma to compare $L$ and $L'_m$.
\begin{lemma}\label{lem:compare L and L',m}
Keep the notation in Setting~\ref{setting}. 
Then $m\phi'^*L- L'_m$ is $\mathbb{Q}$-effective on $Z'$. In particular, $(mL)^{\dim Z}\geq (L'_m)^{\dim Z}$.
\end{lemma}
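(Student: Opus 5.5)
The plan is to compare the two divisors on $X'$ by pulling everything back along $f'$ and using that $f'$ has connected fibers. Since $K_X\sim_{\mathbb{Q}} f^*L$, we have $\phi^*(mK_X)\sim_{\mathbb{Q}} \phi^*f^*(mL)=f'^*\phi'^*(mL)$. Choose a positive integer $r$ such that $rmL$ is Cartier on $Z$ and $rK_X\sim f^*(rL)$ holds as an honest linear equivalence. Then $\phi^*(rmK_X)\sim f'^*\phi'^*(rmL)$ is an integral Cartier divisor.

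Next, by \eqref{eq:f*L=Mov}, $f'^*L'_m$ is a member of $\Mov|\lfloor\phi^*(mK_X)\rfloor|$. Hence
\[
r\phi^*(mK_X) - r f'^*L'_m \ge r\bigl(\phi^*(mK_X)-\lfloor\phi^*(mK_X)\rfloor\bigr) + rF \ge 0,
\]
where $F$ is the fixed part. This gives an effective divisor $E$ with $f'^*\bigl(\phi'^*(rmL)-rL'_m\bigr)\sim E$. Write $D=\phi'^*(rmL)-rL'_m$, a Cartier divisor on $Z'$. Because $f'$ is a fibration, $f'_*\mathcal{O}_{X'}=\mathcal{O}_{Z'}$, and the projection formula gives
\[
H^0(Z',D)\cong H^0(X',f'^*D)\neq 0.
\]
So $D$ is linearly equivalent to an effective divisor, i.e. $m\phi'^*L-L'_m\sim_{\mathbb{Q}}\frac{1}{r}D_{\mathrm{eff}}$ is $\mathbb{Q}$-effective. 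The only point needing care is the choice of $r$ clearing denominators so that the linear equivalences are integral; this is routine.

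For the intersection inequality, set $d=\dim Z$ and expand via a telescoping sum:
\[
(m\phi'^*L)^d-(L'_m)^d=\sum_{i=0}^{d-1}(m\phi'^*L-L'_m)\cdot(m\phi'^*L)^{i}\cdot(L'_m)^{d-1-i}.
\]
Both $\phi'^*L$ and $L'_m$ are nef: the first is the pullback of an ample divisor, and the second is base point free as the pullback of a hyperplane class. Each term is therefore the intersection of a $\mathbb{Q}$-effective divisor with nef classes, and hence is nonnegative. Finally, $(m\phi'^*L)^d=(mL)^d$ since $\phi'$ is birational. This gives $(mL)^{d}\ge (L'_m)^{d}$.

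No step is a serious obstacle. The main point to check is that the descent of effectivity from $X'$ to $Z'$ genuinely uses $f'_*\mathcal{O}_{X'}=\mathcal{O}_{Z'}$, which holds because $f'$ is the Iitaka fibration composed with birational modifications and therefore has connected fibers.
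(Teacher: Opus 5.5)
Your proof is correct and follows the paper's own argument. You pull $m\phi'^*L-L'_m$ back via $f'$, get $\mathbb{Q}$-effectivity on $X'$ from \eqref{eq:f*L=Mov}, descend it using $f'_*\mathcal{O}_{X'}=\mathcal{O}_{Z'}$, and conclude from the nefness of $\phi'^*L$ and $L'_m$. You spell out details the paper leaves implicit (clearing denominators, the projection formula, the telescoping expansion); note that your display $r\phi^*(mK_X)-rf'^*L'_m\ge\cdots$ should be read up to linear equivalence rather than as a literal inequality of divisors.
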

\begin{proof}

 Note that 
\[f'^*(m\phi'^*L-L'_m)\sim_\mathbb{Q} m\phi^*K_X-f'^*L'_m\]
is $\mathbb{Q}$-effective by \eqref{eq:f*L=Mov}. Hence $m\phi'^*L- L'_m$ is $\mathbb{Q}$-effective as $f'$ is a fibration. 
As $\phi'^*L$ and $L'_m$ are nef divisors on $Z'$, we have $(mL)^{\dim Z}\geq (L'_m)^{\dim Z}$.
\end{proof}

As a quick application, we are prepared to prove Theorem~\ref{thm:main2}.

\begin{proof}[Proof of Theorem~\ref{thm:main2}]
We may assume that $p_g(V)\geq 2$, otherwise the inequality is trivial.
Consider the canonical map $\Phi_{|K_V|}: V\dashrightarrow T\subseteq \mathbb{P}^{p_g(V)-1}$
where $T$ is the closure of its image. 
Take a resolution $V'$ of $V$ 
with morphisms $\phi: V'\to V$ and $h: V'\to T$.
Then 
$\Mov|\lfloor\phi^*K_V\rfloor|=|h^*\mathcal{O}_T(1)|.$
So we have $h^0(V , mK_V)\geq h^0(T, \mathcal{O}_T(m))$ for every positive integer $m$.
Since $\kappa(V)=1$ and $\dim T>0$, we know that $\dim T=1$. 
Moreover, by the Riemann--Roch formula on curves, we have
\begin{align*}
\Ivol(V)\geq \limsup_{m\to +\infty}\frac{h^0(T, \mathcal{O}_T(m))}{m}=\deg_T \mathcal{O}_T(1)\geq p_g(V)-1.
\end{align*}
Here the last inequality follows from the non-degeneracy of $T$ (\cite[Corollary~18.12]{Harris1992AlgebraicGeometryFirstCourse}).
This concludes the proof.
\end{proof}

\subsection{Restriction comparison result for canonical divisors}
In this subsection, we establish a restriction comparison result for canonical divisors. 
It was essentially proved in \cite[Lemma~3.3]{CHJ25} during the study of the Noether inequality of $3$-folds of general type. 
The proof below closely follows the original argument with necessary modifications.

\begin{lemma}[{cf. \cite[Proposition~3.1, Lemma~3.3]{CHJ25}}]\label{lem:compare KX KG}
Let $X$ be a minimal projective $3$-fold.
Let $\phi:X'\to X$ be a projective birational morphism where $X'$ is terminal, such that there is a fibration $g:X'\to \Gamma$ where $\Gamma$ is a smooth projective curve. 
Let $G$ be a general fiber of $g$ and $\sigma:G\to G_0$ the morphism onto its minimal model. 

\begin{enumerate}
 \item If $p_g(\Gamma)>0$, then $\phi^*K_X|_G=\sigma^*K_{G_0}$;

 \item If $p_g(\Gamma)=0$ and $\phi^*K_X-bG$ is $\mathbb{Q}$-effective for some positive rational number $b$, then 
$ \phi^*K_X|_G-\frac{b}{b+1}\sigma^*K_{G_0}$ is 
$\mathbb{Q}$-effective.

 \item If $p_g(X)\geq 2$ and $\Mov |\lfloor \phi^*K_X\rfloor|$ is base-point free and induces the fibration $g$, then 
\[
p_g(X) \phi^*K_X|_G\geq (p_g(X)-1) \sigma^*K_{G_0}.
\]
To be more precise, there exist effective divisors $D\in |K_X|$ and $D_0\in |K_{G_0}|$ such that 
\[
p_g(X) \phi^*D|_G-(p_g(X)-1) \sigma^*D_0
\]
is an effective $\mathbb{Q}$-divisor.
\end{enumerate}

\end{lemma}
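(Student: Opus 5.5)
The plan is to compare $\phi^*K_X|_G$ with the positive part of the Zariski decomposition of $K_G$. One inequality is free. Write $K_{X'}=\phi^*K_X+E$ with $E\geq 0$ $\phi$-exceptional, which is possible since $X$ is terminal. Then $K_G=\phi^*K_X|_G+E|_G$, and $\phi^*K_X|_G$ is nef. Since $\sigma^*K_{G_0}$ is the maximal nef subdivisor of $K_G$, we get $\phi^*K_X|_G\leq \sigma^*K_{G_0}$. All the work is in the reverse direction, and the strategy is to pass to a birational model of $X$ on which the fibration to $\Gamma$ becomes a morphism whose general fiber is, or dominates in a controlled way, the minimal surface $G_0$.

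\textbf{Part (1).} If $p_g(\Gamma)>0$, then $\Gamma$ has genus $\geq 1$, so no rational curve dominates $\Gamma$. I would run a $K_{X'}$-MMP, which is possible since $X'$ is terminal. Every $K_{X'}$-negative extremal ray is spanned by a rational curve, and such a curve must be vertical over $\Gamma$. Hence the MMP runs over $\Gamma$ and ends with a minimal model $Y\to\Gamma$. Two minimal models are connected by flops and have the same pullback of $K$ to a common resolution, so we may compare $Y$ with $X$ there. On $Y$ the general fiber $G_Y$ is a terminal, hence smooth, surface with $K_{G_Y}=K_Y|_{G_Y}$ nef. Therefore $G_Y=G_0$, and $\phi^*K_X|_G=\sigma^*K_{G_0}$.

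\textbf{Part (2).} If $\Gamma=\mathbb{P}^1$, horizontal rational curves can appear, and I would use the hypothesis to absorb them. Write $\phi^*K_X\sim_{\mathbb{Q}} bG+\Delta$ with $\Delta\geq 0$. For a suitable choice of $\Delta$ in its class, I would consider the pair
\[
\Bigl(X',\tfrac{1}{b}\Delta\Bigr),\qquad K_{X'}+\tfrac1b\Delta\sim_{\mathbb{Q}}\Bigl(1+\tfrac1b\Bigr)\phi^*K_X-G+E .
\]
The idea is to run an MMP for $K_{X'}+\frac{1}{b+1}(\cdots)$, rescaled so that the $G$-term makes horizontal curves non-negative; equivalently, one shows that $\frac{b}{b+1}$ of the movable part $\phi^*K_X$ "pays for" one copy of the fiber class. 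Restricting to $G$ on the output model and using adjunction, together with $G|_G\equiv 0$, should yield
\[
\tfrac{b+1}{b}\,\phi^*K_X|_G\geq K_{G_Y}\geq \sigma^*K_{G_0}\quad\text{modulo effective divisors,}
\]
which is the claim. This is the step I expect to be the main obstacle. One must arrange the coefficients so that the MMP stays relative over $\Gamma$, or at least does not destroy the fiber structure, and so that the restricted boundary is effective on $G$. This is where I would follow \cite[Proposition~3.1, Lemma~3.3]{CHJ25}: their terminal model for the canonical pencil uses only the $\mathbb{Q}$-effectivity of $\phi^*K_X-bG$, not bigness of $K_X$. So I would check each step of their construction and replace every use of bigness by this effectivity.

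\textbf{Part (3).} This follows from (1) and (2). By Remark~\ref{rem:pencil}, $M=\Mov|\lfloor\phi^*K_X\rfloor|\equiv aG$ with $a\geq p_g(X)-1$, so $\phi^*K_X-(p_g(X)-1)G$ is $\mathbb{Q}$-effective. If $p_g(\Gamma)>0$, part (1) gives the stronger equality. Otherwise part (2) with $b=p_g(X)-1$ gives $\frac{b}{b+1}=\frac{p_g(X)-1}{p_g(X)}$. For the precise effective version, I would fix $D\in|K_X|$ with $\phi^*D=M+F$, $M=\sum_{i=1}^{a}G_i$ and $F\geq 0$. I would then redo the argument of (2) with this explicit boundary, so that the resulting effective $\mathbb{Q}$-divisor is $p_g(X)\phi^*D|_G-(p_g(X)-1)\sigma^*D_0$ for some $D_0\in|K_{G_0}|$ obtained by pushing forward the restricted adjunction divisor.
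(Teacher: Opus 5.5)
\textbf{Part (1) is sound.} Your part (1) is correct by a slightly different route. You run a $K_{X'}$-MMP over $\Gamma$; this needs a $\mathbb{Q}$-factorial model such as a resolution, which is harmless. The paper instead uses that fibers of $\phi$ are rationally chain connected \cite{hacon2007}, so $g$ descends to a morphism $X\to\Gamma$ whose general fiber is already $G_0$. Both arguments rest on the fact that rational curves cannot dominate $\Gamma$.

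\textbf{The gap is in (2), and hence in (3).} The whole content of (2) is a model $\phi\colon X'\to X$ of the pencil, supplied by \cite[Proposition~3.1]{CHJ25}, satisfying
\[
\widetilde E\coloneqq \phi^*(K_X+\phi_*G)-K_{X'}-G\geq 0 .
\]
In your notation, the exceptional divisor $E=K_{X'}-\phi^*K_X$ is bounded by $\phi^*\phi_*G-G$. This property involves neither $b$ nor bigness of $K_X$; the hypothesis on $b$ enters only afterwards. You never formulate it, and your sketch does not produce it.

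On an arbitrary model, restrict $K_{X'}+\frac1b\Delta\sim_{\mathbb{Q}}(1+\frac1b)\phi^*K_X-G+E$ to $G$ and use $K_G=\phi^*K_X|_G+E|_G$. This only gives back $\frac1b\phi^*K_X|_G\geq 0$, because $E|_G$ is uncontrolled. Your MMP with a $b$-dependent boundary, ``rescaled so that horizontal curves are non-negative'', is not specified enough to check three things: that it stays over $\Gamma$, that it keeps the model terminal, and that it bounds $E$.

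Once $\widetilde E\geq 0$ is available, (2) is two lines. The divisor
\[
\bigl(1+\tfrac1b\bigr)\phi^*K_X-K_{X'}-G=\widetilde E+\tfrac1b\,\phi^*(K_X-b\,\phi_*G)
\]
is $\mathbb{Q}$-effective. Restrict it to $G$ and use $K_G\geq\sigma^*K_{G_0}$. So the right heuristic is that $\frac1b\phi^*K_X$ pays for $\phi^*\phi_*G$, not merely for $G$, and the surplus $\phi^*\phi_*G-G$ is what absorbs $E$.

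The precise form of (3) is a statement about actual divisors, not just classes, so it is not a formal consequence of (2). The paper sets up the following:
\begin{enumerate}
\item $d=p_g(X)-1$;
\item $D=d\,\phi_*G'+\Delta_{\mathrm{fix}}\in|K_X|$, where $\Delta_{\mathrm{fix}}$ is the fixed part of $|K_X|$ and $G'$ is another general fiber;
\item $D_0=\sigma_*((\phi^*D+E)|_G)\in|K_{G_0}|$.
\end{enumerate}
It then applies $\widetilde E\geq 0$ to $G'$:
\[
(d+1)\phi^*D\geq d\,\phi^*(D+\phi_*G')=d(\phi^*D+E+G'+\widetilde E)\geq d(\phi^*D+E).
\]
Finally it restricts to $G$, which is disjoint from $G'$, and uses $(\phi^*D+E)|_G\geq\sigma^*D_0$. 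Your plan to ``redo (2) with an explicit boundary'' needs exactly this inequality, so it inherits the same gap.
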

\begin{proof}
(1) By \cite{hacon2007}, the fibers of $\phi$ are rationally chain connected, which implies that $g$ descends to a morphism $g_0: X\to \Gamma$.
As $K_X$ is nef, a general fiber of $g_0$ is a minimal projective surface, which is just the minimal model $G_0$ of $G$. Hence we have $K_X|_{G_0}=K_{G_0}$ by the adjunction, which gives the desired equality by pullback.

\medskip

By (1), Assertion (3) holds automatically when 
$p_g(\Gamma)>0$. 
From now on, we may assume that $p_g(\Gamma)=0$.
By taking higher models, it is clear that the assertions are independent of the choice of $\phi: X'\to X$.
So we may assume that 
$\phi$ is the birational projective morphism constructed in \cite[Proposition~3.1]{CHJ25} satisfying that 
\begin{align}
 \phi^*(K_X+\phi_*G)-K_{X'}-G=\widetilde{E}\geq 0\label{eq: K+G>0}
\end{align}
is an effective $\phi$-exceptional $\mathbb{Q}$-divisor.

\medskip

(2) By assumption, $\phi^*K_X-bG$ is $\mathbb{Q}$-effective. In particular, $K_X-b\phi_*G$ is $\mathbb{Q}$-effective. 
So by \eqref{eq: K+G>0}, 
$(1+\frac1b)\phi^*K_X-K_{X'}-G$
is $\mathbb{Q}$-effective.
Hence $ (1+\frac1b)\phi^*K_X|_G- \sigma^*K_{G_0}$ is 
$\mathbb{Q}$-effective by restricting on $G$.

\medskip

(3) 
Write $d=p_g(X)-1$, we have $K_X=d\phi_*G+\Delta$, where $\Delta$ is the fixed part of $|K_X|$. Take a general fiber $G'$ of $g$ different from $G$, 
then $D=d\phi_*G'+\Delta\in|K_X|$. Write $K_{X'}=\phi^*K_X+E_{\phi}$, then $\phi^*D+E_{\phi}\in |K_{X'}|$ and $(\phi^*D+E_{\phi})|_{G}\in |K_{G}|$.
Take $D_0=\sigma_*((\phi^*D+E_{\phi})|_{G})\in |K_{G_0}|$, then 
\begin{align}
(\phi^*D+E_{\phi})|_{G}-\sigma^*D_0=K_{G}-\sigma^*K_{G_0}\geq 0 \label{eq:phiD+E-D>0}
\end{align}
is an effective $\mathbb{Q}$-divisor. On the other hand, by \eqref{eq: K+G>0}, we have
\[
\phi^*(D+\phi_*G')=\phi^*D+E_{\phi}+G'+\widetilde{E}.
\]
Hence
\[
(d+1)\phi^*D\geq d\phi^*(D+\phi_*G')=d(\phi^*D+E_{\phi}+G'+\widetilde{E})\geq d(\phi^*D+E_{\phi}).
\]
Restricting both sides on $G$, we conclude by \eqref{eq:phiD+E-D>0} that 
\[
(d+1)\phi^*D|_{G}-d\sigma^*D_0\geq 0.
\]
This concludes the proof.
\end{proof}

\section{Noether-type inequality for threefolds of Kodaira dimension $2$}
In this section, we study the Noether-type inequality for $3$-folds of Kodaira dimension $2$. 
The discussion is divided into $2$ parts depending on whether $|K_X|$ is composed with a pencil of surfaces.

\subsection{Non-pencil case}
We first treat the case when $|K_X|$ is not composed with a pencil of surfaces.

\begin{theorem}\label{thm:Noehter not pencil}
Let $X$ be a minimal projective $3$-fold with $\kappa(X)=2$. Assume that $p_g(X)\geq 2$ and $|K_X|$ is not composed with a pencil. Then \[
\Ivol(X)\geq p_g(X)-2.
\]
The equality holds only if $|K_X|$ is base point free and induces a morphism \[\Phi_{|K_X|}: X\to T\subseteq \mathbb{P}^{p_g(X)-1}\] where $T$ is a surface of minimal degree $\deg T=p_g(X)-2$ and $\Phi_{|K_X|}$ coincides with the Iitaka fibration of $X$.

\end{theorem}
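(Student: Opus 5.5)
We work in Setting~\ref{setting} with $m=1$. Since $|K_X|$ is not composed with a pencil and $\Phi_{|K_X|}$ factors through the Iitaka fibration $f:X\to Z$ with $\dim Z=2$, the canonical image $T$ is a surface. Hence $Z\dashrightarrow T$ is generically finite, and so is $s\circ g': Z'\to T$. Let $L'$ be the general member of $g'^*s^*|\mathcal{O}_T(1)|$. Then $L'$ is nef and big on $Z'$, and
\[
(L')^2=\deg(s\circ g')\cdot \deg T\geq \deg T\geq p_g(X)-2,
\]
where the last inequality is the non-degeneracy bound for surfaces in $\mathbb{P}^{p_g(X)-1}$ (\cite[Corollary~18.12]{Harris1992AlgebraicGeometryFirstCourse}). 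Lemma~\ref{lem:compare L and L',m} with $m=1$ gives $\Ivol(X)=L^2=(\phi'^*L)^2\geq (L')^2$, which proves the inequality.

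For the equality case, assume $\Ivol(X)=p_g(X)-2$. Then every inequality above is an equality.
\begin{enumerate}
\item $\deg T=p_g(X)-2$, so $T$ is a surface of minimal degree.
\item $\deg(s\circ g')=1$, so $Z'\to T$ is birational. Hence $\Gamma=Z'$ and $s$ is birational.
\item $(\phi'^*L)^2=(L')^2$.
\end{enumerate}
From (3) we want $\phi'^*L\equiv L'$. Write $\phi'^*L=L'+F$ with $F$ $\mathbb{Q}$-effective. Then
\[
(\phi'^*L)^2-(L')^2=\phi'^*L\cdot F+L'\cdot F,
\]
and both terms are nonnegative because both divisors are nef. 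So $\phi'^*L\cdot F=L'\cdot F=0$, hence $F^2=0$. Since $L'$ is nef and big, the Hodge index theorem applied to $L'\cdot F=0$ gives $F\equiv 0$, and since $F$ is effective, $F=0$. It follows that $f'^*L'\sim_{\mathbb{Q}}\phi^*K_X$. Because $\Mov|\lfloor \phi^*K_X\rfloor|=|f'^*L'|$, the fixed part of $|\lfloor\phi^*K_X\rfloor|$ plus the fractional part vanishes. So $\phi^*K_X$ is Cartier and equals the movable part, which is base point free; therefore $K_X$ is Cartier and $|K_X|$ is base point free on $X$.

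It remains to identify $\Phi_{|K_X|}$ with $f$. The morphism $\Phi_{|K_X|}: X\to T$ satisfies $K_X=\Phi^*\mathcal{O}_T(1)$, and $\Phi$ factors through $f$ via a birational map $Z\to T$. Since $L$ is ample on $Z$ and $L$ is the pullback of $\mathcal{O}_T(1)$, this map contracts no curves, so it is finite and birational. After Stein factorization, $\Phi$ has connected fibers onto its normalized image. Since $T$, being a surface of minimal degree, is normal, we get $Z\cong T$. Hence $\Phi_{|K_X|}$ coincides with the Iitaka fibration.

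The main obstacle is this last equality analysis. It requires converting numerical equality into the vanishing of both the fixed part and the fractional part, which we handle with the Hodge index argument above, and then checking that $Z\to T$ is an isomorphism, for which we use the normality of surfaces of minimal degree.
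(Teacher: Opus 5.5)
Your proposal is correct and follows essentially the same route as the paper. You use the chain $L^2\ge (L')^2\ge \deg T\ge p_g(X)-2$ via Lemma~\ref{lem:compare L and L',m} and the non-degeneracy bound. In the equality case you apply Hodge index to get $\phi'^*L\sim_{\mathbb{Q}}L'$, proving by hand what the paper cites as \cite[Lemma~2.2]{CHJ25}, then derive base point freeness of $|K_X|$ and use normality of $T$ to identify $\Phi_{|K_X|}$ with $f$.

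Two harmless slips. Birationality of $Z'\to T$ only makes $g'$ birational, not $\Gamma=Z'$. Also, since $F$ is only $\mathbb{Q}$-effective, the conclusion is $F\sim_{\mathbb{Q}}0$ rather than $F=0$, which is all you actually use.
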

This inequality is optimal by Example~\ref{example: not a pencil}.
\begin{proof}
Keep the notation in Setting~\ref{setting} for $m=1$ with the subscript omitted. 
Then $T\subseteq \mathbb{P}^{p_g(X)-1}$ is a non-degenerate surface, which implies that $\mathcal{O}_T(1)^2\geq p_g(X)-2$ by 
\cite[Corollary~18.12]{Harris1992AlgebraicGeometryFirstCourse}. Therefore, 
\[
\Ivol(X)=L^2\geq L'^2\geq \mathcal{O}_T(1)^2\geq p_g(X)-2,
\]
where the first inequality follows from Lemma~\ref{lem:compare L and L',m}.

The equality $L^2=L'^2$ implies that $\phi'^*L\sim_\bQ L'$ by Lemma~\ref{lem:compare L and L',m} and the Hodge index theorem (see \cite[Lemma~2.2]{CHJ25}). Pulling back by $f'$, we have that
$\phi^*K_X\sim_\bQ f'^*L'$, 
where $f'^*L'\in \Mov|\lfloor \phi^*K_X\rfloor|$ by \eqref{eq:f*L=Mov}.
So we get $\phi^*K_X\sim f'^*L'$ and it is base point free. Hence we have a morphism $\Phi_{|K_X|}: X\to T\subseteq \mathbb{P}^{p_g(X)-1}$ where $\deg T=p_g(X)-2$. Here $T$ is a non-degenerate surface of minimal degree. 
In particular, it is normal, see \cite[Exercise~IV.18.4]{Beauville}.
Moreover, the equality $L'^2=\mathcal{O}_{T}(1)^2$ implies that the generically finite morphism $Z'\to T$ is of degree $1$. Hence $\Phi_{|K_X|}$ is exactly the Iitaka fibration. 
\end{proof}

\subsection{Pencil case}
We now consider the case when $|K_X|$ is composed with a pencil of surfaces $G$. The following proposition gives an inequality relating $\Ivol(X)$ and $\Ivol(G)$.

\begin{proposition}\label{prop:Noehter pencil}
Let $X$ be a minimal projective $3$-fold with $\kappa(X)=2$ and let $m$ be a positive integer.
Assume that $h^0(X, mK_X)\geq 2$ and that $|mK_X|$ is composed with a pencil of surfaces $G$.
Then 
\[
\Ivol(X)\geq \frac{\mu^2}{\mu+1} \Ivol(G),
\]
where $\mu\coloneqq \frac{h^0(X, mK_X)-1}{m}$. 
\end{proposition}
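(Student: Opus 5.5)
We bound $\Ivol(X)=L^2$ from below by intersecting $\phi'^*L$ with the curve in $Z'$ that $G$ lies over. Then we compare $\phi^*K_X|_G$ with $\sigma^*K_{G_0}$ via Lemma~\ref{lem:compare KX KG}.

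\emph{Setup.} Work in Setting~\ref{setting} with our fixed $m$. Since $|mK_X|$ is composed with a pencil, $\Gamma_m$ is a smooth curve and $G$ is a general fiber of $g_m$. Let $C$ be the corresponding general fiber of $g'_m\colon Z'\to\Gamma_m$. Because $f'$ and $g'_m$ are fibrations, $G=f'^{-1}(C)$ and $f'|_G\colon G\to C$ is a fibration.

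\emph{Step 1: $\phi^*K_X-\mu G$ is $\mathbb{Q}$-effective.} By Remark~\ref{rem:pencil}(2), the moving part satisfies $M_m\equiv aG$ with
\[
a\ge h^0(X,mK_X)-1=m\mu .
\]
Since $m\phi^*K_X\ge M_m$ and $M_m$ is a sum of $a$ fibers of $g_m$, the divisor $\phi^*K_X-bG$ is $\mathbb{Q}$-effective with $b\coloneqq a/m\ge\mu$. On $Z'$, Lemma~\ref{lem:compare L and L',m} together with $L'_m\equiv aC$ shows that $\phi'^*L-\mu C$ is $\mathbb{Q}$-effective. As $\phi'^*L$ is nef,
\[
L^2=(\phi'^*L)^2=\phi'^*L\cdot(\phi'^*L-\mu C)+\mu\,\phi'^*L\cdot C\ \ge\ \mu\,\deg(\phi'^*L|_C).
\]

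\emph{Step 2: $\deg(\phi'^*L|_C)\ge \frac{b}{b+1}\Ivol(G)$.} Apply Lemma~\ref{lem:compare KX KG} to $g_m\colon X'\to\Gamma_m$; this is legitimate since $X'$ is smooth, hence terminal.
\begin{itemize}
\item If $p_g(\Gamma_m)>0$, part (1) gives $\phi^*K_X|_G=\sigma^*K_{G_0}$.
\item If $p_g(\Gamma_m)=0$, part (2) with the $b$ from Step 1 shows that $\phi^*K_X|_G-\frac{b}{b+1}\sigma^*K_{G_0}$ is $\mathbb{Q}$-effective.
\end{itemize}
In both cases the weaker conclusion, $\mathbb{Q}$-effectivity of $\phi^*K_X|_G-\frac{b}{b+1}\sigma^*K_{G_0}$, holds.

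Next identify both terms as pullbacks from $C$. First, $\phi^*K_X|_G\sim_{\mathbb{Q}}(f'|_G)^*(\phi'^*L|_C)$. Second, $\kappa(G)=1$ by Remark~\ref{rem:pencil}(1). The numerical class $\phi^*K_X|_G$ is trivial on fibers of $f'|_G$, and $\sigma^*K_{G_0}$ is bounded above by a multiple of it. Hence $\sigma^*K_{G_0}$ is also trivial on those fibers, so $f'|_G$ is (a model of) the Iitaka fibration of $G$. Concretely, $\sigma^*K_{G_0}\sim_{\mathbb{Q}}(f'|_G)^*L_G$ with $\deg L_G=\Ivol(G)$. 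So the difference is the pullback of a $\mathbb{Q}$-divisor on $C$ of degree
\[
\deg(\phi'^*L|_C)-\tfrac{b}{b+1}\Ivol(G).
\]
A $\mathbb{Q}$-effective divisor pulled back from $C$ has nonnegative degree: intersect with a general curve in $G$ dominating $C$, or compare $h^0$ of multiples. This gives the claimed inequality.

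\emph{Step 3: conclusion.} Combining Steps 1 and 2,
\[
L^2\ \ge\ \mu\cdot\frac{b}{b+1}\Ivol(G)\ \ge\ \frac{\mu^2}{\mu+1}\Ivol(G),
\]
since $t\mapsto t/(t+1)$ is increasing and $b\ge\mu$.

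The main obstacle is Step 2, in particular the identification of $f'|_G$ with the Iitaka fibration of $G$. This is needed so that the comparison of Lemma~\ref{lem:compare KX KG} becomes a degree inequality on $C$ that directly involves $\Ivol(G)$.
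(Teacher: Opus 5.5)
Your proposal is correct and is essentially the paper's proof: bound $L^2$ below by $\frac{1}{m}\,\phi'^*L\cdot L'_m$, apply Lemma~\ref{lem:compare KX KG}(1)(2) with $b=a/m$, and turn the resulting $\mathbb{Q}$-effectivity into a degree inequality on the base curve of $G\to F$. The paper keeps $a/m$ instead of $\mu$ in the first step and gets $\frac{b^2}{b+1}\Ivol(G)$ directly, and it identifies $f'|_G$ with the Iitaka fibration simply because its general fibers are elliptic.

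One small point: your justification that $\phi^*K_X-\frac{a}{m}G$ is $\mathbb{Q}$-effective only works when $\Gamma_m\cong\mathbb{P}^1$, since otherwise $M_m\equiv aG$ holds only numerically. That is exactly the case in which you use it, so nothing breaks.
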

\begin{proof}
Keep the notation in Setting~\ref{setting}. 
Recall we have the following commutative diagram:
\begin{equation*}
\begin{tikzcd}
X' \arrow[r, "f'"] \arrow[d, "\phi"'] \arrow[rr, "g_m", bend left, shift left] & Z' \arrow[d, "\phi'"'] \arrow[r, "g'_m"] 
& \Gamma_m \arrow[d, "s_m"] & \\
X \arrow[r, "f"] \arrow[rr, "\Phi_{|mK_X|}", dashed, bend right, shift right] & Z \arrow[r, dashed] & T_m \arrow[r, hook] & \mathbb{P}^{h^0(X, mK_X)-1}
\end{tikzcd}
\end{equation*}
Let $F$ be a general fiber of $g'_m$, which is a smooth projective curve by Bertini's theorem. Note that $f': X'\to Z'$ induces an elliptic fibration $f_G: G\to F$ between general fibers of $g_m$ and $g'_m$.
By \eqref{eq:f*L=Mov} and Remark~\ref{rem:pencil}, we have 
$M_m \equiv aG$ and $L'_m\equiv a F$ with \begin{align}
 a\geq h^0(X, mK_X)-1.\label{eq: a>p-1}
\end{align}

Since $\phi'^*L$ and $L'_m$ are nef and $m\phi'^*L-L'_m$ is $\mathbb{Q}$-effective by Lemma~\ref{lem:compare L and L',m}, we have 
\begin{align}
\Ivol(X)=L^2=(\phi'^*L)^2\geq \frac1{m}(\phi'^*L\cdot L'_m)\geq \frac am(\phi'^*L\cdot F).\label{eq:L2>kLF}
\end{align}

 Denote by $\sigma: G\to G_0$ the morphism to the minimal model of $G$, then the natural morphism $f_0:G_0\to F$ is just the Iitaka fibration of $G_0$ with $f_G=f_0\circ \sigma$. In particular, there exists a $\mathbb{Q}$-divisor $L_0$ on $F$ such that $K_{G_0}\sim_\mathbb{Q}f_0^*L_0$. By Lemma~\ref{lem:compare KX KG}(1)(2) with $b=\frac{a}{m}$, $\phi^*K_X|_G-\frac{a}{a+m}\sigma^*K_{G_0}$ is $\mathbb{Q}$-effective. Also we have 
$f_G^*(\phi'^*L|_F)=\phi^*K_X|_G$.
So $f_G^*(\phi'^*L|_F)-\frac{a}{a+m}\sigma^*f_0^*L_0$ is $\mathbb{Q}$-effective, which implies that 
\begin{align}
\deg(\phi'^*L|_F)\geq \frac{a}{a+m}\deg(L_0)=\frac{a}{a+m} \Ivol(G_0)= \frac{a}{a+m} \Ivol(G).\label{eq: deg L >= IvolG}
\end{align}
So we get the desired inequality by combining with \eqref{eq: a>p-1} and \eqref{eq:L2>kLF}.
\end{proof}
\begin{remark}\label{rem after 3.2}
In the proof of Proposition~\ref{prop:Noehter pencil}, we remark that the equality in \eqref{eq: deg L >= IvolG} holds only if $\deg(\phi'^*L|_F)= \frac{a}{a+m}\deg(L_0)$, which implies that
$\phi^*K_X|_G\sim_\bQ\frac{a}{a+m}\sigma^*K_{G_0}$. So the equality in Proposition~\ref{prop:Noehter pencil} holds only if $\phi^*K_X|_G\sim_\bQ\frac{h^0(X, mK_X)-1}{h^0(X, mK_X)-1+m}\sigma^*K_{G_0}$.
\end{remark}
In order to 
apply Proposition~\ref{prop:Noehter pencil} to obtain a Noether-type inequality, we need to study $\Ivol(G)$ where $G$ is a smooth projective surface with $\kappa(G)=1$ and $p_g(G)>0$. The following lemma characterizes such $G$ with small Iitaka volume. 

\begin{lemma}\label{lem: Ivol S<1}
 Let $G$ be a smooth projective surface with $\kappa(G)=1$. 
 Assume that $p_g(G)>0$ and $\Ivol(G)<1$. Then the following hold:
 \begin{enumerate}
 \item $p_g(G)=1$, $h^1(G, \mathcal{O}_{G})=0$, $h^0(G, 2K_G)=2$, and $\Ivol(G)=1-\frac{1}{m_1}$ for an integer $m_1\geq 2$;
 
 \item the Iitaka fibration of $G_0$ has exactly one multiple fiber $m_1R$ with multiplicity $m_1$ and $(m_1-1)R\in |K_{G_0}|$ is the unique element, where $G_0$ is the minimal model of $G$. 
 \end{enumerate}
 
\end{lemma}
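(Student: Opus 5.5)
We will work on the minimal model $G_0$, whose Iitaka fibration $f_0\colon G_0\to B$ is an elliptic fibration onto a smooth curve $B$. We use Kodaira's canonical bundle formula:
\[
K_{G_0}\sim_{\mathbb{Q}} f_0^*\Big(K_B+\mathbb{L}\Big)+\sum_i (m_i-1)R_i,
\qquad \deg \mathbb{L}=\chi(\mathcal{O}_{G_0})\ge 0,
\]
where $m_iR_i$ are the multiple fibers. Since $R_i\equiv \frac{1}{m_i}F$ for a general fiber $F$, this gives
\[
\Ivol(G)=\deg L_0=2g(B)-2+\chi(\mathcal{O}_{G_0})+\sum_i\Big(1-\frac1{m_i}\Big).
\]
The plan is to bound this from below using $p_g>0$ and pin down all the invariants when the value is $<1$.

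First we use $p_g(G)>0$ to get $g(B)\le 1$ and constraints on $\chi$. Recall $h^1(\mathcal{O}_{G_0})$ equals either $g(B)$ or $g(B)+1$, the latter only when $\mathbb{L}$ is trivial. Hence $\chi=1-q+p_g$. If $g(B)\ge 2$ then $\Ivol\ge 2$. If $g(B)=1$ then $\Ivol=\chi+\sum(1-1/m_i)$. Here $\chi\ge 0$ is an integer and each summand is $\ge 1/2$, so $\Ivol<1$ forces $\chi=0$ and at most one multiple fiber. Then $\deg\mathbb{L}=0$ with $p_g=h^0(B,K_B+\mathbb{L})>0$ forces $\mathbb{L}\sim 0$ (the relevant $h^0$ of the fixed part computed via $f_{0*}$). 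The case of one multiple fiber gives $\Ivol=1-1/m_1$ but then $q=2$, $\chi=0$, $p_g=1$; we must exclude it. Here I expect to use $h^1(\mathcal{O})$ constraints or the elementary fact that $h^0(K_{G_0})=h^0(B,K_B+\mathbb{L})$ requires a nontrivial section. I anticipate that excluding the $g(B)=1$ subcases is one delicate point: the case with no multiple fiber gives $\Ivol=0$, contradicting $\kappa=1$. The case with one multiple fiber over an elliptic base with $\mathbb{L}$ trivial may in fact violate $h^1(\mathcal{O}_{G_0})=0$ claimed in (1), so this case must be ruled out. I would rule it out by noting that a single multiple fiber over an elliptic curve with trivial $\mathbb{L}$ cannot exist: the associated local system/torsion data must be trivial. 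Concretely, Jacobian fibration triviality plus a single logarithmic transform on a product-like surface violates the sum condition on the cover. This is the step I expect to be the main obstacle; I would try to use the unramified-cover argument (a multiple fiber over an elliptic base with $\chi=0$ forces isotrivial quotient structure, where multiple fibers come from group actions and cannot occur singly).

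For $g(B)=0$ we have $\Ivol=\chi-2+\sum(1-1/m_i)$ with $p_g=h^0(\mathbb{P}^1,\mathcal{O}(\chi-2))$ (the multiple-fiber part being fixed). Thus $p_g>0$ gives $\chi\ge 2$, and $p_g=\chi-1$. So $\Ivol<1$ forces $\chi=2$, $p_g=1$, $q=0$, and $\sum(1-1/m_i)<1$, i.e. at most one multiple fiber. Zero multiple fibers gives $\Ivol=0$, contradicting $\kappa=1$. Hence exactly one multiple fiber $m_1R$, $m_1\ge2$, and $\Ivol=1-1/m_1$. Then $K_{G_0}\sim f_0^*\mathcal{O}(0)+(m_1-1)R=(m_1-1)R$, which is the unique member since $p_g=1$. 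Finally $2K_{G_0}\sim f_0^*\mathcal{O}_{\mathbb{P}^1}(0)+2(m_1-1)R$. Writing $2(m_1-1)=m_1+(m_1-2)$, this is $f_0^*\mathcal{O}(1)+(m_1-2)R$, whose $h^0$ is $h^0(\mathcal{O}_{\mathbb{P}^1}(1))=2$ because $(m_1-2)R$ is fixed for $m_1-2<m_1$. This gives $h^0(2K_G)=2$ and completes (1) and (2).
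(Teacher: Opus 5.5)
Your overall route is the paper's: Kodaira's canonical bundle formula, $\Ivol(G)=2g(B)-2+\chi(\mathcal{O}_{G_0})+\sum_i(1-\frac1{m_i})$, and a split by $g(B)$. The case $g(B)=0$ is correct. It is even a bit more self-contained than the paper's, since reading off $p_g=h^0(\mathbb{P}^1,\mathcal{O}(\chi-2))=\chi-1$ gives $\chi=2$ and $q=0$ without quoting Serrano's lemma on $h^1$. Your reductions for $g(B)=1$ are also right: $\chi=0$, exactly one multiple fiber, $\mathbb{L}\cong\mathcal{O}_B$, $p_g=1$, $q=2$.

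\textbf{The gap.} The step you flagged is never carried out. You must show that an elliptic fibration over an elliptic curve with $\chi=0$, $\mathbb{L}\cong\mathcal{O}_B$ and exactly one multiple fiber cannot exist. Without this, $h^1(G,\mathcal{O}_G)=0$ and everything after it are unproved. This is exactly the step the paper handles by citation: Serrano's Lemma 1.6 (1991) gives $h^1=g(B)+1$, and Proposition 5.6 (1996) then excludes a single multiple fiber.

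\textbf{Why your heuristic fails.} The claim that multiple fibers coming from group actions ``cannot occur singly'' is false in general. Send the generators of $\pi_1(B\setminus\{p\})$ to $(12)$ and $(123)$ in $S_3$. The loop around $p$, which is the commutator, maps to a $3$-cycle, so this gives an $S_3$-cover $C\to B$ branched only over $p$. Let $S_3$ act on an elliptic curve $E$ with $3$-cycles acting by translation by a $3$-torsion point and transpositions by $x\mapsto -x$. The diagonal action is free, and $(C\times E)/S_3\to B$ has $\chi=0$, $\kappa=1$, and exactly one multiple fiber, of multiplicity $3$. Here $p_g=0$ and $q=1$.

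\textbf{What is missing.} You have to use $\mathbb{L}\cong\mathcal{O}_B$, equivalently $q=g(B)+1$, as follows.
\begin{enumerate}
\item Since $e(G_0)=12\chi=0$, every fiber is a multiple of a smooth elliptic curve and $j$ is constant.
\item Hence $G_0\cong (C\times E)/H$ with $H$ finite acting freely and diagonally, $C/H=B$, and the multiple fibers lying over the branch points of $C\to B$.
\item $q(G_0)=g(C/H)+g(E/H)=2$ forces $E/H$ to be elliptic, so $H$ acts on $E$ by translations and its image in $\operatorname{Aut}(E)$ is abelian.
\item The stabilizer over the unique branch point $p$ is generated by the image of the loop around $p$, which is a commutator $[\alpha,\beta]$ in $\pi_1(B\setminus\{p\})$. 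That element therefore acts trivially on $E$.
\item Freeness of the diagonal action forces this element to be trivial, so $C\to B$ is unramified at $p$. This contradicts $m_1\geq 2$.
\end{enumerate}
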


\begin{proof}
Without loss of generality, we may assume that $G$ is minimal. 
Denote by $f_G:G\to F$ the Iitaka fibration.
By Kodaira's canonical bundle formula \cite[Theorem~V.12.1]{BHPV2004}, we have 
\begin{align}
K_{G}\sim f_G^*\left(K_F+L_F+\sum_{i=1}^r\frac{m_i-1}{m_i}P_i\right),\label{eq:cbf elliptic fib}
\end{align}
where $L_F$ is a divisor on $F$ such that $\mathcal{O}_F(L_F)=(R^1{f_G}_*\mathcal{O}_{G})^{\vee}$, $P_i$ is a prime divisor on $F$ and $f_G^{-1}(P_i)$ is a multiple fiber of $f_G$ with multiplicity $m_i$ for each $1\leq i\leq r$. By the relative duality 
and \cite[Proposition~V.12.2, Theorem~III.18.2]{BHPV2004}, we have \[\chi(G, \mathcal{O}_{G})=\deg L_F=\deg {f_G}_*\omega_{G/F}\geq 0.\] In particular, 
\[
\Ivol(G)=2p_g(F)-2+\chi(G, \mathcal{O}_{G})+\sum_{i=1}^r\frac{m_i-1}{m_i}.
\]
Since $\Ivol(G)<1$, we have $p_g(F)\leq 1$.

Suppose that $p_g(F)=1$. Then we have $\chi(G, \mathcal{O}_G)=0$ and $r=1$ by $\Ivol(G)<1$. 
Note that $h^1(G, \mathcal{O}_{G})=1+p_g(G)-\chi(G, \mathcal{O}_G)\geq 2$, so by \cite[Lemma~1.6]{Serrano1991}, we have $h^1(G, \mathcal{O}_{G})=2=p_g(F)+1$. But we get a contradiction by \cite[Proposition~5.6]{Serrano1996} as $f_G$ cannot have exactly one multiple fiber, namely, $r\neq 1$.

Suppose that $p_g(F)=0$. 
Then by \cite[Lemma~1.6]{Serrano1991}, either $h^1(G, \mathcal{O}_{G})=p_g(F)=0$, or $\chi(G, \mathcal{O}_{G})=0$ and $h^1(G, \mathcal{O}_{G})=p_g(F)+1=1$.
In the latter case, 
\[0=\chi(G, \mathcal{O}_{G})=p_g(G)-h^1(G, \mathcal{O}_{G})+1=p_g(G)>0,\] which is absurd.
So we have $h^1(G, \mathcal{O}_{G})=p_g(F)=0$ and $\chi(G, \mathcal{O}_{G})=p_g(G)+1\geq 2$. Then $\Ivol(G)<1$ implies that \[\chi(G, \mathcal{O}_{G})=2, \quad \Ivol(G)=\frac{m_1-1}{m_1}, \quad r=1 .\]
In particular, $f_G$ has exactly one multiple fiber $m_1R$ with multiplicity $m_1$. Then by \eqref{eq:cbf elliptic fib}, $K_G\sim (m_1-1)R$ and $2K_G\sim (2m_1-2)R$, which implies that $p_g(G)=1$ and $h^0(G, 2K_G)=2$. 
\end{proof}

The inequality provided by Proposition~\ref{prop:Noehter pencil} is not good enough when $\Ivol(G)$ is small (e.g., $\Ivol(G)<\frac23$). To improve this inequality, we shall further consider the bicanonical system $|2K_X|$ of $X$. 
In the following we prove two key lemmas on $|2K_X|$ in this case: one estimates the lower bound of bicanonical genus, the other shows that the bicanonical map is actually not composed with a pencil.

\begin{lemma}
\label{lem:P2 >= 3pg-3}
Let $X$ be a minimal projective $3$-fold with $\kappa(X)=2$. 
Assume that $p_g(X)\geq 2$ and that $|K_X|$ is composed with a pencil of surfaces $G$ with $p_g(G)=1$ and $h^1(G, \mathcal{O}_{G})=0$. Then
\[
h^0(X, 2K_X)\geq 3p_g(X)-3.
\]
The equality holds only if $X$ is Gorenstein and $h^1(X, \mathcal{O}_{X})=h^2(X, \mathcal{O}_X)=0$.
\end{lemma}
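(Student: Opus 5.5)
The plan is to compute $h^0(X,2K_X)$ through Reid's plurigenus formula and to control the higher cohomology of $2K_X$ with Kollár's theorems applied to the Iitaka fibration $f\colon X\to Z$. For a minimal $3$-fold, Reid's Riemann--Roch gives
\[
\chi(X,\mathcal{O}_X(2K_X))=\tfrac12 K_X^3-3\chi(\mathcal{O}_X)+l(2),\qquad l(2)=\sum_{Q\in\mathcal{B}}\frac{b_Q(r_Q-b_Q)}{2r_Q}\geq 0,
\]
where the sum runs over the basket $\mathcal{B}$ of $X$. Since $\kappa(X)=2$ we have $K_X^3=0$. Writing $q=h^1(\mathcal{O}_X)$ and $h^2=h^2(\mathcal{O}_X)$, this becomes
\[
\chi(2K_X)=3p_g(X)-3+3q-3h^2+l(2).
\]
Moreover, $l(2)=0$ if and only if the basket is empty, which for terminal $3$-folds means that $X$ is Gorenstein.

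Next I handle the higher cohomology. Write $2K_X=K_X+f^*(L)$ with $K_X\sim_{\mathbb Q}f^*L$ and $L$ ample. Kollár's torsion-freeness and vanishing theorems (in the klt/$\mathbb Q$-version, applied on a resolution and pushed forward, since $X$ has rational singularities) give
\[
H^i(X,2K_X)\cong H^0(Z,R^if_*\omega_X\otimes\mathcal{O}_Z(L)),
\]
with all higher cohomology on $Z$ vanishing. The fibres of $f$ are curves, so $R^2f_*\omega_X=0$ and $h^2(2K_X)=h^3(2K_X)=0$. Hence
\[
h^0(2K_X)=\chi(2K_X)+h^1(2K_X)\geq \chi(2K_X).
\]
Applying the same decomposition to $K_X$ itself (with $A=0$) gives $h^2(\mathcal{O}_X)=h^1(K_X)=h^0(Z,R^1f_*\omega_X)=h^0(Z,\omega_Z)=p_g(Z)$, since $R^1f_*\omega_X\cong\omega_Z$ for a fibration in curves.

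It remains to show $p_g(Z)=0$, and this uses the pencil hypothesis. In Setting~\ref{setting} with $m=1$, the surface $Z'$ is fibred over $\Gamma$ with general fibre $F$, and $G\to F$ is the Iitaka (elliptic) fibration of $G$. Since $h^1(\mathcal{O}_G)=0$, the curve $F$ has $g(F)\le q(G)=0$, so $F\cong\mathbb P^1$. Therefore $Z'$ is ruled over $\Gamma$, so $p_g(Z)=0$ and $h^2(\mathcal{O}_X)=0$. Combining the pieces,
\[
h^0(2K_X)=3p_g(X)-3+3q+l(2)+h^1(2K_X)\geq 3p_g(X)-3,
\]
and equality forces $q=0$, $l(2)=0$ (so $X$ is Gorenstein), and $h^2(\mathcal{O}_X)=0$.

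The main obstacle is the second step. Kawamata--Viehweg vanishing is unavailable because $K_X$ is not big, so the argument has to rely on Kollár's decomposition $H^i(X,K_X+f^*A)=H^0(Z,R^if_*\omega_X\otimes A)$ and on its validity for a $\mathbb Q$-Cartier $A$ on singular terminal $X$. I would first verify this on a resolution $\phi\colon X'\to X$: use $\phi_*\omega_{X'}=\omega_X$ and $R^i\phi_*\omega_{X'}=0$, together with an index-one cover or Kawamata's $\mathbb Q$-divisor version of Kollár's theorem to handle the fractional part of $L$. Alternatively, the paper's suggested injectivity theorem \cite[Theorem~6-1-6]{KMM87} could replace this step. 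The identification $R^1f_*\omega_X\cong\omega_Z$ up to torsion, and $q(X)$ versus $q(Z)$, also need care, but only the inequality direction is needed.
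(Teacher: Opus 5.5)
\textbf{There is a genuine gap: the bound on $h^2(\mathcal{O}_X)$.} Your treatment of $h^2(X,2K_X)$ is a workable alternative to the paper's argument. The paper restricts to a general very ample surface $S$ and combines Kawamata--Viehweg vanishing for $K_S+K_X|_S$ with Serre vanishing. Your route works if you use the klt form of Koll\'ar's torsion-freeness and vanishing for $K_{X'}+\lceil\phi^*K_X\rceil$ on a resolution. Note also that $R^2$ vanishes because it is torsion free and generically zero, not because every fibre of $f$ is a curve.

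The gap is the next step, where you apply the decomposition ``with $A=0$'' to get $h^2(\mathcal{O}_X)=h^0(Z,R^1f_*\omega_X)=p_g(Z)$. Without an ample twist there is no vanishing, and Koll\'ar's decomposition only gives
\[
H^1(X,\omega_X)\cong H^1(Z,f_*\omega_X)\oplus H^0(Z,R^1f_*\omega_X).
\]
The first summand need not vanish. In fact $h^2(\mathcal{O}_X)=0$ is false under the hypotheses of the lemma. Let $X=C\times S$, with $C$ a curve of genus $g\geq 2$ and $S$ a minimal elliptic surface with $\kappa(S)=1$, $p_g(S)=1$, $q(S)=0$ (e.g.\ a general fibre $G$ of the canonical pencil in Example~\ref{example: pencil}). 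Then $|K_X|$ is composed with the pencil $X\to C$ with fibre $S$, $p_g(X)=g$, and $Z=C\times\mathbb{P}^1$ has $p_g(Z)=0$. Yet $h^2(\mathcal{O}_X)=h^2(\mathcal{O}_S)=1$ by K\"unneth; the term you dropped is $h^1(Z,f_*\omega_X)=h^1(Z,\omega_C\boxtimes\mathcal{O}_{\mathbb{P}^1})=1$. So your formula $h^0(2K_X)=3p_g(X)-3+3q+l(2)+h^1(2K_X)$ is unjustified, and so is the equality analysis built on it.

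\textbf{The missing idea is to use the canonical pencil.} What the argument needs is $q(X)\geq h^2(\mathcal{O}_X)$, i.e.\ $\chi(X,\omega_X)\geq p_g(X)-1$; in the example above this holds since $q(X)=g\geq 1$. Apply Koll\'ar's theorems to the canonical pencil $g\colon X'\to\Gamma$ rather than to the Iitaka fibration. Because $p_g(G)=1$ and $h^1(\mathcal{O}_G)=0$, torsion-freeness shows that $g_*\omega_{X'}$ is a line bundle with $h^0=p_g(X)$ and that $R^1g_*\omega_{X'}=0$, while $R^2g_*\omega_{X'}=\omega_\Gamma$. The decomposition then yields
\[
\chi(X,\omega_X)=\chi(\Gamma,g_*\omega_{X'})+\chi(\Gamma,\omega_\Gamma)=\deg g_*\omega_{X'}\geq p_g(X)-1
\]
by Lemma~\ref{lem:deg>=h0-1}. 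Together with $h^2(2K_X)=0$ and Reid's formula, this gives the bound.

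In the equality case you get $l(2)=0$, so $X$ is Gorenstein, and $\deg g_*\omega_{X'}=p_g(X)-1$. Hence $p_g(\Gamma)=0$, and
\[
q(X)=h^2(X',\omega_{X'})=h^0(\Gamma,\omega_\Gamma)+h^1(\Gamma,R^1g_*\omega_{X'})=0.
\]
Finally $h^2(\mathcal{O}_X)=q(X)=0$ follows from $\chi(\omega_X)=p_g(X)-1$. Your observation that $F\cong\mathbb{P}^1$ and $p_g(Z)=0$ is correct, but it does not control $h^2(\mathcal{O}_X)$.
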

\begin{proof}
First, we show that $h^2(X, 2K_X)=0$. 
The proof is well-known to experts (see \cite[Lemma~8.5]{Miyaoka1987}), but we reproduce it here for completeness. 
Take a general sufficiently ample Cartier divisor $S$ on $X$, we may assume that $S$ is a smooth projective surface by Bertini's theorem as $X$ has only isolated singularities by \cite[Corollary~5.18]{Kollar-Mori-1998}. 
By the exact sequence 
\[
 H^1(S,K_S+K_X|_S)\to H^2(X, 2K_X)\to H^2(X, 2K_X+S),
\]
it suffices to show that \[H^1(S,K_S+K_X|_S)=H^2(X, 2K_X+S)=0.\] 
It is clear that $H^2(X, 2K_X+S)=0$
by the Serre vanishing 
since $S$ is sufficiently ample. 
On the other hand,
denote by $f: X\to Z$ the Iitaka fibration of $X$ with $K_X\sim_\mathbb{Q}f^*L$, then the induced morphism $f_S: S\to Z$ is generically finite and 
$K_X|_S\sim_\mathbb{Q} f_S^*L$ is nef and big on $S$, which implies that $H^1(S,K_S+K_X|_S)=0$ by the Kawamata--Viehweg vanishing theorem \cite[Theorem~1-2-3]{KMM87}. Therefore, $h^2(X, 2K_X)=0$.
In particular, $h^0(X, 2K_X)\geq \chi(X, \mathcal{O}_X(2K_X)).$

As $K_X^3=0$, by Reid's Riemann--Roch formula \cite[Corollary~(10.3)(5)]{Reid-1987-YPG}, we have 
\begin{align}
\chi(X, \mathcal{O}_X(2K_X))\geq-3\chi(X, \mathcal{O}_X)=3\chi(X, \omega_X).\label{eq: 2K=3O}
\end{align}
So it suffices to show that $\chi(X, \omega_X)\geq p_g(X)-1$. 

Keep the notation in Setting~\ref{setting} for $m=1$ with the subscript omitted. Take $G$ to be a general fiber of $g: X'\to \Gamma$. 
By \cite[Theorem~2.1]{Kollar1986-I}, for each $i\geq 0$, $R^ig_*\omega_{X'}$ is torsion free, hence locally free since $\Gamma$ is a smooth curve. In particular, since $p_g(G)=1$ and $h^1(G, \mathcal{O}_{G})=0$, $g_*\omega_{X'}$ is a line bundle and $R^1g_*\omega_{X'}=0$ by \cite[Corollary~III.12.9]{Hartshorne1977}. 
Also we have 
$R^2g_*\omega_{X'}=\omega_{\Gamma}$ by the relative duality. 
Then by \cite[Corollary~3.2]{Kollar1986-II}, 
we have
\begin{align*}
\chi(X, \omega_X)=\chi(X', \omega_{X'})=& \chi(\Gamma, g_*\omega_{X'})-\chi(\Gamma, R^1g_*\omega_{X'})+\chi(\Gamma, R^2g_*\omega_{X'})\\
=& \chi(\Gamma, g_*\omega_{X'})+\chi(\Gamma, \omega_\Gamma)\\
=&\deg(g_*\omega_{X'})\\
\geq & p_g(X)-1.
\end{align*}
Here the last equality follows from the Riemann--Roch formula and the inequality follows from Lemma~\ref{lem:deg>=h0-1}. 

Finally, if the equality holds, then the equality in \eqref{eq: 2K=3O} holds, which implies that $X$ is Gorenstein by \cite[Corollary~(10.3)(5)]{Reid-1987-YPG} again. Also $\deg(g_*\omega_X)=p_g(X)-1$ implies that $p_g(\Gamma)=0$ by Lemma~\ref{lem:deg>=h0-1} and $h^1(X, \mathcal{O}_{X})=h^2(X, \mathcal{O}_X)$. 
A standard spectral sequence argument (see \cite[\S 2.6]{EXPII}) shows that
\[
h^1(X, \mathcal{O}_X)=p_g(\Gamma)+h^1(\Gamma, R^1g_*\omega_{X'})=0. \qedhere
\]
\end{proof}

\begin{lemma}\label{lem: 2K not pencil}
Let $X$ be a minimal projective $3$-fold with $\kappa(X)=2$. Assume that $p_g(X)\geq 3$ and $|K_X|$ is composed with a pencil of surfaces $G$ with $\Ivol(G)<\frac23$.
Then $|2K_X|$ is not composed with a pencil of surfaces.
\end{lemma}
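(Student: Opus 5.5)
The plan is to show that, under the pencil hypothesis for $|2K_X|$, the restriction of $|2K_{X'}|$ to a general $G$ is trivial, whereas the geometry of $G$ forces it to contain a moving pencil.

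\textbf{Step 1: the surface $G$.} Since $p_g(X)\geq 2$ and $|K_X|$ is composed with a pencil of surfaces $G$, we have $\kappa(G)=1$ by Remark~\ref{rem:pencil}. Also $p_g(G)>0$, since $|K_{X'}|$ restricts nontrivially to $G$. As $\Ivol(G)<\frac23<1$, Lemma~\ref{lem: Ivol S<1} applies. It gives $\Ivol(G)=1-\frac1{m_1}<\frac23$, so $m_1=2$ and $\Ivol(G)=\frac12$. On the minimal model $G_0$ with Iitaka fibration $f_0:G_0\to F\cong\mathbb{P}^1$, there is a unique double fiber $2R$. Moreover $K_{G_0}\sim R$ is rigid, while $2K_{G_0}\sim 2R\sim f_0^*(\text{pt})$ moves in a base point free pencil with $h^0(G_0,2K_{G_0})=2$.

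\textbf{Step 2: reduction to a non-trivial restriction.} Suppose, for contradiction, that $|2K_X|$ is composed with a pencil. Sums of two members of $|K_X|$ lie in $|2K_X|$, so $\Phi_{|K_X|}$ factors through $\Phi_{|2K_X|}$. Both maps have one-dimensional image, so the two pencils coincide after Stein factorization, and $G$ is also a general fiber of $g_2$. Hence $\Mov|\lfloor\phi^*(2K_X)\rfloor|\equiv a_2G$, and every member of $|2K_{X'}|$ restricts to $G$ as a fixed divisor. In particular, the image of $H^0(X',2K_{X'})\to H^0(G,2K_G)$ has dimension at most $1$. It therefore suffices to exhibit a $2$-dimensional subspace of $H^0(G,2K_G)$ in this image.

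\textbf{Step 3: producing sections via the injectivity theorem.} Take $\phi$ as in Lemma~\ref{lem:compare KX KG}, with $\Mov|\lfloor\phi^*K_X\rfloor|=|M|$ base point free and $M\equiv (p_g(X)-1)G$.

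Since $2K_{X'}\geq K_{X'}+\lceil\phi^*K_X\rceil$, I consider the restriction
\[
H^0\big(X',K_{X'}+\lceil\phi^*K_X\rceil\big)\to H^0\big(G,K_G+\lceil\phi^*K_X\rceil|_G\big).
\]
I would write $\phi^*K_X=G+Q$ with $Q\geq (p_g(X)-2)G$ plus fixed part. To get surjectivity I need $H^1(X',K_{X'}+\lceil Q\rceil)\to H^1(X',K_{X'}+\lceil Q\rceil+G)$ to be injective. Kawamata--Viehweg is unavailable because $K_X$ is not big, so I would instead invoke \cite[Theorem~6-1-6]{KMM87}. The semiample part of $Q$ is a positive multiple of the base point free class $G$ (this uses $p_g(X)\geq 3$), and the added divisor $G$ is a member of a multiple of it. Controlling the fractional and fixed parts so that the hypotheses of \cite[Theorem~6-1-6]{KMM87} hold is the main obstacle. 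My first attempt is to perturb by a small multiple of the fixed part, after making it snc.

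\textbf{Step 4: the round-up on $G$.} By Lemma~\ref{lem:compare KX KG}(3), $\phi^*K_X|_G\geq \frac{p_g(X)-1}{p_g(X)}\sigma^*K_{G_0}$ with $\frac{p_g(X)-1}{p_g(X)}\geq\frac23>\frac12$. The round-up of the restriction should therefore contain $\sigma^*R$, and $K_G+\sigma^*R\geq \sigma^*(2R)$, which moves in a $2$-dimensional linear system. This yields two independent sections of $2K_G$ in the image of the restriction map, contradicting Step 2.

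I expect the delicate points to be the rounding bookkeeping in this step, namely checking that the coefficient exceeding $\frac12$ indeed rounds up to a full copy of $\sigma^*R$, together with the injectivity hypotheses in Step 3.
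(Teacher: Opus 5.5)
\textbf{There is a genuine gap, in the two steps you flag, and they are coupled: repairing Step~3 removes the mechanism of Step~4.} Your architecture matches the paper's. Lemma~\ref{lem: Ivol S<1} forces $m_1=2$, \cite[Theorem~6-1-6]{KMM87} lifts a $2$-dimensional space of sections from $G$ into a subsystem of $|2K_{X'}|$, and this contradicts the pencil assumption.

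\textbf{Step~3 (injectivity).} In general $\phi^*K_X-G$ is not nef, and neither is a small perturbation of it. In the case $p_g(\Gamma)=0$, write $\phi^*K_X\sim_{\bQ}(p_g-1)G+E$. Then $\phi^*K_X-G-tE\sim_{\bQ}(1-t)\phi^*K_X-(1-t(p_g-1))G$. For $t<\frac1{p_g-1}$ this is negative on every curve $C$ with $\phi^*K_X\cdot C=0<G\cdot C$, e.g.\ curves lying over base points of the pencil. So you must remove at least $\frac1{p_g-1}E$. The paper takes exactly $N=\phi^*K_X-\frac1{p_g-1}E-G\sim_{\bQ}(1-\frac1{p_g-1})\phi^*K_X$, plus the pullback of a positive-degree divisor on $\Gamma$ when $p_g(\Gamma)>0$. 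This $N$ is semiample with $mN-G$ $\bQ$-effective, as in Claim~\ref{claim1}.

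\textbf{Step~4 (rounding on $G$).} With this choice, Lemma~\ref{lem:compare KX KG}(3) only gives $N|_G\geq(1-\frac2{p_g})\sigma^*K_{G_0}$. The coefficient $1-\frac2{p_g}$ equals $\frac13$ for $p_g=3$ and $\frac12$ for $p_g=4$, so your ``coefficient $>\frac12$'' argument is unavailable exactly where it is needed. It is also not the right mechanism for larger coefficients. Along a $\sigma$-exceptional curve of multiplicity $k$ in $\sigma^*R$, $\lceil c\,\sigma^*R\rceil$ contains $\sigma^*R$ only if $c>1-\frac1k$.

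\textbf{Missing idea.} Let $K_{G/G_0}$ absorb the rounding loss:
\[
K_G+\lceil c\,\sigma^*K_{G_0}\rceil=2\sigma^*K_{G_0}+K_{G/G_0}-\lfloor(1-c)\sigma^*R\rfloor.
\]
Here $R$ is the reduced support of the unique double fiber. By Kodaira's classification it is of type $I_b$, hence a reduced normal crossing curve. So $(G_0,(1-c)R)$ is klt, and for every $c>0$,
\[
\sigma_*\mathcal{O}_G\bigl(K_{G/G_0}-\lfloor(1-c)\sigma^*R\rfloor\bigr)=\mathcal{J}((1-c)R)=\mathcal{O}_{G_0}.
\]
This gives $h^0(G,D)\geq h^0(G_0,2K_{G_0})=2$. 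It is here, through $c=1-\frac2{p_g}>0$ (and $\frac2{p_g}<1$), that $p_g\geq 3$ is actually used.
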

\begin{proof}
Keep the notation in Setting~\ref{setting} for $m=1$ with the subscript omitted. Take $G$ to be a general fiber of $g: X'\to \Gamma$. 
By Remark~\ref{rem:pencil}, $\kappa(G)=1$. 
Also $p_g(X)>0$ implies that $p_g(G)>0$ by the adjunction formula $K_{X'}|_G=K_G$. 
So $G$ satisfies the conditions of Lemma~\ref{lem: Ivol S<1}. 

Write $p_g=p_g(X)$ for simplicity. By Remark~\ref{rem:pencil}, we may write 
\[\phi^*K_X\sim_{\bQ} \sum_{j=1}^{a}G_j+E\equiv aG+E\]
where $a\geq p_g-1$, each $G_j$ is a general fiber of $g$ with \[\sum_{j=1}^{a}G_j=M_1\in \Mov|\lfloor\phi^*K_X\rfloor|,\] and $E$ is an effective $\bQ$-divisor on $X'$. 
\begin{claim}\label{claim1}
 The $\bQ$-divisor 
\[
N\coloneq \phi^*K_X-\frac{1}{p_g-1}E-G
\]
is semiample and $mN-G$ is $\mathbb{Q}$-effective for a sufficiently large integer $m$. 
\end{claim}
\begin{proof}
 If $p_g(\Gamma)=0$, then by Remark~\ref{rem:pencil}, we have $a=p_g-1$ and that 
 \[N\sim_\mathbb{Q}\left(1-\frac1{p_g-1}\right)\phi^*K_X\]
 is a semiample $\mathbb{Q}$-divisor. 

 If $p_g(\Gamma)>0$, then by Remark~\ref{rem:pencil}, we have $a>p_g-1$ and
 \[N\sim_\mathbb{Q}\left(1-\frac1{p_g-1}\right)\phi^*K_X+\frac{1}{p_g-1}(M_1-(p_g-1)G)\]
is a semiample $\mathbb{Q}$-divisor, as $\phi^*K_X$ and $M_1-(p_g-1)G$ are both semiample. Here recall that $M_1-(p_g-1)G$ is the pull-back of an ample $\mathbb{Q}$-divisor on $\Gamma$. 

Finally, in either case, $mN-G$ is $\mathbb{Q}$-effective for a sufficiently large integer $m$ as $\phi^*K_X-G$ is $\mathbb{Q}$-effective.
\end{proof}
After replacing $X'$ by a higher model, we may assume that $N$ has simple normal crossing support. 
By Claim~\ref{claim1} and \cite[Theorem~6-1-6]{KMM87}, the natural map
\begin{align*}
& H^1\left(X',K_{X'}+\left\lceil \phi^*K_X-\frac1{p_g-1}E-G \right\rceil\right)\\\to &H^1\left(X',K_{X'}+\left\lceil \phi^*K_X-\frac1{p_g-1}E \right\rceil\right)
\end{align*}
is injective, which implies that the natural map
\begin{align}\label{eq:K+K-tE}
H^0\left(X',K_{X'}+\left\lceil \phi^*K_X-\frac1{p_g-1}E\right\rceil\right)\to H^0(G,D)
\end{align}
is surjective,
where
\[D\coloneqq \left(K_{X'}+\left\lceil \phi^*K_X-\frac1{p_g-1}E\right\rceil\right)|_G=K_G+\left\lceil\phi^*K_X|_G-\frac1{p_g-1}E|_G\right\rceil,\] here 
the restriction on $G$
commutes with the roundup 
since $G$ is general.

Let $\sigma:G\to G_0$ be the morphism to the minimal model. By Lemma~\ref{lem:compare KX KG}(3), 
we know that $\phi^*K_X|_G-\frac{p_g-1}{p_g}\sigma^*K_{G_0}$ is an effective $\mathbb{Q}$-divisor where we regard $\phi^*K_X$ and $\sigma^*K_{G_0}$ as well-chosen effective divisors as in Lemma~\ref{lem:compare KX KG}(3). 
Then
\begin{align*}
&\phi^*K_X|_G-\frac1{p_g-1}E|_G\geq \left(1-\frac1{p_g-1}\right)\phi^*K_X|_G\\
\geq &\left(1-\frac{1}{p_g-1}\right)\frac{p_g-1}{p_g}\sigma^*K_{G_0}= \left(1-\frac{2}{p_g}\right)\sigma^*K_{G_0}. 
\end{align*}
So we have
\begin{align}\label{eq D>2K+K}
D\geq K_G+\left\lceil \left(1-\frac{2}{p_g}\right)\sigma^*K_{G_0}\right\rceil=2\sigma^*K_{G_0}+K_{G/G_0}-\left\lfloor\frac{2}{p_g}\sigma^*K_{G_0}\right\rfloor.
\end{align}
By Lemma~\ref{lem: Ivol S<1}, $K_{G_0}\sim R$ where $2R$ is the unique multiple fiber of the Iitaka fibration of $G_0$. Here we used the condition $\Ivol(G)<\frac23$ to guarantee that $m_1=2$ in Lemma~\ref{lem: Ivol S<1}.
By Kodaira's classification of singular fibers \cite[V.7]{BHPV2004}, $R$ is of type $I_0$, $I_1$ or $I_b$ with $b\geq2$, which means that $R$ is an analytically normal crossing divisor.
Hence by \cite[Example~9.2.13]{lazarsfeld2004-II} and $p_g\geq 3$, we have 
\[
\sigma_*\mathcal{O}_G\left(K_{G/G_0}-\left\lfloor\frac{2}{p_g}\sigma^*R\right\rfloor\right)=\mathcal{J}\left(\frac{2}{p_g}R\right)=\mathcal{O}_{G_0}.
\]
Therefore, by the projection formula and Lemma~\ref{lem: Ivol S<1}, \eqref{eq D>2K+K} implies that 
\[
h^0(G, D)\geq h^0(G_0, 2K_{G_0})=2.
\]

Suppose that $|2K_X|$ is composed with a pencil of surfaces, then it is composed with the same pencil as $|K_X|$, which induces the same fibration $g:X'\to \Gamma$. But by the surjectivity of \eqref{eq:K+K-tE} and $h^0(G, D)\geq 2$, we know that $\Mov|\lfloor\phi^*(2K_X)\rfloor|$ induces a non-trivial movable linear system on $G$, a contradiction. 
\end{proof}

We are now prepared to prove the optimal Noether-type inequality in the case when $|K_X|$ is composed with a pencil of surfaces. 

\begin{theorem}\label{thm: noether pencil}
Let $X$ be a minimal projective $3$-fold with $\kappa(X)=2$. Assume that $p_g(X)\geq 2$ and $|K_X|$ is composed with a pencil of surfaces.
Then
\[
\Ivol(X)\geq \frac{3}{4}p_g(X)-\frac{5}{4}.
\]
\end{theorem}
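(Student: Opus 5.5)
We split according to the value of $\Ivol(G)$, where $G$ is a general member of the canonical pencil. Keep Setting~\ref{setting} for $m=1$ and write $p_g=p_g(X)$. By Remark~\ref{rem:pencil}, $\kappa(G)=1$, and $p_g(G)>0$ by adjunction. By Kodaira's canonical bundle formula, $\Ivol(G)$ is a positive rational number of the form $2p_g(F)-2+\chi+\sum\frac{m_i-1}{m_i}$. Lemma~\ref{lem: Ivol S<1} shows that if $\Ivol(G)<1$ then $\Ivol(G)=1-\frac1{m_1}\in\{\frac12,\frac23,\frac34,\dots\}$. We therefore distinguish two cases: $\Ivol(G)\ge\frac23$ and $\Ivol(G)=\frac12$.

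\emph{Case $\Ivol(G)\geq \frac23$.} Apply Proposition~\ref{prop:Noehter pencil} with $m=1$, so that $\mu=p_g-1$. This gives
\[
\Ivol(X)\geq \frac{(p_g-1)^2}{p_g}\cdot\frac23 .
\]
We then check that $\frac{2(p_g-1)^2}{3p_g}\geq \frac{3p_g-5}{4}$. Clearing denominators, this is $8(p_g-1)^2\ge 3p_g(3p_g-5)$, i.e.\ $8p_g^2-16p_g+8\ge 9p_g^2-15p_g$, i.e.\ $p_g^2+p_g-8\le 0$. That holds only for $p_g\le 2$, so this crude bound fails for larger $p_g$. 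The fix is to split further. If $\Ivol(G)\ge1$, Proposition~\ref{prop:Noehter pencil} gives $\frac{(p_g-1)^2}{p_g}\ge p_g-2\ge \frac34p_g-\frac54$ whenever $p_g\ge3$; for $p_g=2$ the right-hand side is $\frac14$ and is also beaten. The remaining values $\Ivol(G)\in\{\frac23,\frac34,\dots\}$ with $\Ivol(G)<1$ still fail for large $p_g$, so they must also be routed through the bicanonical argument below. The plan is to run that argument for every $G$ with $p_g(G)=1$ and $h^1(G,\mathcal O_G)=0$, which by Lemma~\ref{lem: Ivol S<1} covers all $\Ivol(G)<1$. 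The only point requiring care is extending Lemma~\ref{lem: 2K not pencil} from $m_1=2$ to general $m_1$. Its proof uses $m_1=2$ only to get $h^0(G,D)\ge h^0(G_0,2K_{G_0})=2$ via the multiplier ideal $\mathcal J(\frac{2}{p_g}(m_1-1)R)$, and for $p_g$ large relative to $m_1$ this ideal is still trivial. For small $p_g$ we check the inequality directly: $p_g=2$ needs only $\Ivol(X)\ge\frac14$, which is immediate from Proposition~\ref{prop:Noehter pencil}, since $\frac12\cdot\frac12=\frac14$.

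\emph{Bicanonical case ($p_g\ge3$, $G$ as in Lemma~\ref{lem: Ivol S<1}).} By Lemma~\ref{lem:P2 >= 3pg-3}, $h^0(X,2K_X)\ge 3p_g-3$. By Lemma~\ref{lem: 2K not pencil} (or its extension), $|2K_X|$ is not composed with a pencil, so its image $T_2\subseteq\mathbb P^{h^0(2K_X)-1}$ is a nondegenerate surface. Arguing as in Theorem~\ref{thm:Noehter not pencil}, via Lemma~\ref{lem:compare L and L',m} with $m=2$ and \cite[Corollary~18.12]{Harris1992AlgebraicGeometryFirstCourse},
\[
4\Ivol(X)=(2L)^2\ge L_2'^2\ge \deg T_2\ge h^0(X,2K_X)-2\ge 3p_g-5,
\]
which is exactly $\Ivol(X)\ge\frac34p_g-\frac54$.

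The main obstacle is the intermediate range $\frac23\le\Ivol(G)<1$ when $p_g$ is small relative to $m_1$. There the multiplier ideal $\mathcal J(\frac{2(m_1-1)}{p_g}R)$ may become nontrivial and Lemma~\ref{lem: 2K not pencil} may fail. I would try to close this range by a direct numeric comparison from Proposition~\ref{prop:Noehter pencil}, using $\Ivol(G)\ge\frac23$ for the few small $p_g$ involved, e.g.\ $p_g=3$ gives $\frac43\cdot\frac23=\frac89\ge1$. Failing that, I would apply Proposition~\ref{prop:Noehter pencil} with $m=2$ when $|2K_X|$ is a pencil, using $h^0(2K_X)\ge3p_g-3$, so that $\mu\ge\frac{3p_g-4}{2}$. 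With $\Ivol(G)\ge\frac12$ this yields roughly $\frac{\mu^2}{2(\mu+1)}$, which should dominate $\frac34p_g-\frac54$ for moderate $p_g$.
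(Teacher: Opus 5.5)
Your proposal has a genuine gap, exactly in the range you flag as the main obstacle, and neither of your patches closes it. You justify the extension of Lemma~\ref{lem: 2K not pencil} by the triviality of $\mathcal J(\frac{2(m_1-1)}{p_g}R)$, which needs $p_g\ge 2m_1-1$. For the remaining pairs with $p_g\ge3$, Proposition~\ref{prop:Noehter pencil} with $m=1$ fails for $(p_g,m_1)\in\{(3,3),(4,3),(4,4),(5,4),(6,4)\}$.

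\begin{itemize}
\item Your check ``$\frac43\cdot\frac23=\frac89\ge1$'' is false, since $\frac89<1=\frac34\cdot3-\frac54$. It also contradicts your own earlier computation that the $m=1$ bound with $\Ivol(G)\ge\frac23$ fails for every $p_g\ge3$.
\item The second patch fails as written. Put $t=\frac{3p_g-4}{2}$, so that $\frac34p_g-\frac54=\frac{2t-1}{4}$. Then $\frac{t^2}{2(t+1)}\ge\frac{2t-1}{4}$ is equivalent to $t\le1$, which is false for all $p_g\ge3$. With only $\Ivol(G)\ge\frac12$, the $m=2$ bound is asymptotically $\frac34p_g-\frac32$.
\end{itemize}

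So, for instance, the case $p_g=3$, $\Ivol(G)=\frac23$ is left unproved.

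The missing step is to split on whether $|2K_X|$ is composed with a pencil rather than on $m_1$; this is what the paper does.

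\begin{itemize}
\item If $|2K_X|$ is not composed with a pencil, your degree argument applies to every $G$ with $\Ivol(G)<1$. This is because Lemma~\ref{lem:P2 >= 3pg-3} only uses $p_g(G)=1$ and $h^1(G,\mathcal O_G)=0$.
\item If $|2K_X|$ is composed with a pencil, it is the same pencil as $|K_X|$. The contrapositive of Lemma~\ref{lem: 2K not pencil}, exactly as stated, then gives $\Ivol(G)\ge\frac23$. Proposition~\ref{prop:Noehter pencil} with $m=2$ and $\mu\ge t$ yields $\Ivol(X)\ge\frac{2t^2}{3(t+1)}$. The inequality $\frac{2t^2}{3(t+1)}\ge\frac{2t-1}{4}$ reduces to $2t^2-3t+3\ge0$, which always holds.
\end{itemize}

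So your second fallback works once $\frac12$ is replaced by $\frac23$, and no extension of the lemma is needed.

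Alternatively, your own route can be completed, because the multiplier ideal does not need to be trivial. Since $R$ is reduced with normal crossings, $\mathcal J(\lambda R)=\mathcal O_{G_0}(-\lfloor\lambda\rfloor R)$. Hence $h^0(G,D)\ge h^0\bigl(G_0,(2m_1-2-\lfloor\frac{2(m_1-1)}{p_g}\rfloor)R\bigr)$. This coefficient is at least $m_1$ for every $m_1\ge2$ once $p_g\ge3$. Since $m_1R$ is a fiber of $G_0\to\mathbb P^1$, you get $h^0(G,D)\ge2$, and the extension of the lemma holds in full.
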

This inequality is optimal 
by Example~\ref{example: pencil}.
\begin{proof}
Keep the notation in Setting~\ref{setting} for $m=1$ with the subscript omitted.
Take $G$ to be a general fiber of $g: X'\to \Gamma$. Then $p_g(G)>0$ by the adjunction formula $K_{X'}|_G=K_G$.

If $\Ivol(G)\geq 1$, then 
Proposition~\ref{prop:Noehter pencil} implies that \[\Ivol(X)\geq \frac{(p_g(X)-1)^2}{p_g(X)}> \frac{3}{4}p_g(X)-\frac{5}{4}.\]

From now on, suppose that $\Ivol(G)< 1$, then Lemma~\ref{lem: Ivol S<1} implies that $p_g(G)=1$ and $h^1(G, \mathcal{O}_{G})=0$.

If $p_g(X)=2$, then Proposition~\ref{prop:Noehter pencil} implies that $\Ivol(X)\geq \frac{1}{4}$ as $\Ivol(G)\geq \frac{1}{2}$ by Lemma~\ref{lem: Ivol S<1}.

If $p_g(X)\geq 3$ and $|2K_X|$ is composed with a pencil of surfaces, then it is composed with the same pencil of surfaces $G$ as $|K_X|$, then
by Lemma~\ref{lem: 2K not pencil}, we have $\Ivol(G)\geq \frac{2}{3}$. 
Then 
Proposition~\ref{prop:Noehter pencil} for $m=2$ and Lemma~\ref{lem:P2 >= 3pg-3} imply that \[\Ivol(X)\geq \frac{(\frac32p_g(X)-2)^2 }{\frac32p_g(X)-1}\cdot \frac{2}{3}> \frac{3}{4}p_g(X)-\frac{5}{4}.\]

If $|2K_X|$ is not composed with a pencil of surfaces,
then $\dim T_2=2$ under the notation of Setting~\ref{setting} for $m=2$. 
By Lemma~\ref{lem:compare L and L',m}, \cite[Corollary~18.12]{Harris1992AlgebraicGeometryFirstCourse}, and Lemma~\ref{lem:P2 >= 3pg-3}, we have
\begin{align}
4\Ivol(X)=4L^2\geq(L'_2)^2\geq \mathcal{O}_{T_2}(1)^2\geq h^0(X, 2K_X)-2\geq 3p_g(X)-5.\label{eq: last inequality}
\end{align}
This concludes the proof.
\end{proof}

\begin{proof}[Proof of Theorem~\ref{thm:main1}]
 We may replace $V$ by its minimal model $X$. 
 We may assume that $p_g(X)\geq 2$, otherwise the inequality is trivial. Then the theorem follows from Theorem~\ref{thm:Noehter not pencil} and Theorem~\ref{thm: noether pencil}.
\end{proof}

\subsection{Description of the structure of $X$ for the equality case}
In this subsection, we discuss the properties of minimal projective $3$-folds for which the equality of Theorem~\ref{thm:main1} holds.

\begin{proof}[Proof of Theorem~\ref{thm:equality case}]
 If $|K_X|$ is not composed with a pencil, then $p_g(X)\geq 3$ and 
 by Theorem~\ref{thm:Noehter not pencil}, \[
\frac{3}{4}p_g(X)-\frac{5}{4}= \Ivol(X)
\geq p_g(X)-2.
 \]
 Hence $p_g(X)=3$ and $\Ivol(X)=1= p_g(X)-2$. Then the conclusion follows from Theorem~\ref{thm:Noehter not pencil}.

 Now suppose that $|K_X|$ is composed with a pencil. Keep the notation in the proof of Theorem~\ref{thm: noether pencil}.
 
First we exclude the case that $p_g(X)=2$. Suppose that $p_g(X)=2$, then 
$\Ivol(X)=\frac14$ and by the proof of Theorem~\ref{thm: noether pencil}, $|K_X|$ is composed with a pencil of surfaces $G$ with $\Ivol(G)=\frac12.$
Recall that $h^0(X, 2K_X)\geq 2p_g(X)-1=3$.
If 
$|2K_X|$ is not composed with a pencil of surfaces, then $h^0(X, 2K_X)>3$, and \eqref{eq: last inequality} implies that $\Ivol(X)\geq \frac12$, a contradiction. So $|2K_X|$ is composed with a pencil of surfaces which is the same pencil as $|K_X|$, and Proposition~\ref{prop:Noehter pencil} for $m=2$ implies that $\mu=\frac{h^0(X, 2K_X)-1}{2}\geq 1$ and
\[
\frac14=\Ivol(X)\geq \frac{\mu^2}{\mu+1}\cdot \frac12\geq \frac 14.
\]
This implies that $h^0(X, 2K_X)= 3=3p_g(X)-3$. 
By Lemma~\ref{lem: Ivol S<1} and Lemma~\ref{lem:P2 >= 3pg-3}, $X$ is Gorenstein. 
Recall that the equality in Proposition~\ref{prop:Noehter pencil} for $m=1$ holds, so by Remark~\ref{rem after 3.2}, \begin{align}
 \phi^*K_X|_G\sim_\bQ\frac{1}{2}\sigma^*K_{G_0}.\label{eq: K=1/2KG}
\end{align} 
Here $\phi: X'\to X$ is a resolution resolving the pencil and $\sigma:G\to G_0$ is the morphism to the minimal model of $G$. 
By Lemma~\ref{lem: Ivol S<1}, the reduced fiber $R\in |K_{G_0}|$ is the unique element. So by Lemma~\ref{lem:compare KX KG}(3), there is an element $D\in |K_X|$ such that $2\phi^*D|_G\geq \sigma^*R$. 
By \eqref{eq: K=1/2KG}, we have 
$2\phi^*D|_G= \sigma^*R$. But this is absurd as $R$ is reduced and $D$ is Cartier.

 Now we consider $p_g(X)\geq 3$. Then
 we know that the equality holds only if the following properties are satisfied:
 \begin{itemize}
 \item $|K_X|$ is composed with a pencil of surfaces $G$ with $\Ivol(G)<1$, $p_g(G)=1$, and $h^1(G, \mathcal{O}_{G})=0$;
 \item $|2K_X|$ is not composed with a pencil of surfaces. 
 \end{itemize}
Then by \eqref{eq: last inequality}, we have
\[4L^2=(L_2')^2=\mathcal{O}_{T_2}(1)^2=h^0(X, 2K_X)-2=3p_g(X)-5.\]

The equality $h^0(X, 2K_X)=3p_g(X)-3$ implies that $X$ is Gorenstein and $h^1(X, \mathcal{O}_{X})=h^2(X, \mathcal{O}_X)=0$ by Lemma~\ref{lem:P2 >= 3pg-3}.

The equality $4L^2=(L_2')^2$ implies that $2\phi'^*L\sim_\bQ L_2'$ by Lemma~\ref{lem:compare L and L',m} and the Hodge index theorem (see \cite[Lemma~2.2]{CHJ25}). Pulling back by $f'$, we have that
$\phi^*(2K_X)\sim_\bQ f'^*L_2'$, 
where $f'^*L_2'\in \Mov|\lfloor \phi^*(2K_X)\rfloor|$ by \eqref{eq:f*L=Mov}.
So we get $\phi^*(2K_X)\sim f'^*L_2'$ and it is base point free. Hence we have a morphism $\Phi_{|2K_X|}: X\to T_2\subseteq \mathbb{P}^{3p_g(X)-4}$ where $\deg T_2=3p_g(X)-5$. Here $T_2$ is a non-degenerate surface of minimal degree. 
In particular, it is normal, see \cite[Exercise~IV.18.4]{Beauville}.
Moreover, the equality $(L_2')^2=\mathcal{O}_{T_2}(1)^2$ implies that the generically finite morphism $Z'\to T_2$ is of degree $1$. Hence $\Phi_{|2K_X|}$ is exactly the Iitaka fibration. 

Finally we determine $\Ivol(G)$. 
Let $\sigma:G\to G_0$ be the morphism to the minimal model. 
By Lemma~\ref{lem: Ivol S<1}, $\Ivol(G)=1-\frac{1}{m_1}$ for some integer $m_1\geq 2$ and the Iitaka fibration of $G_0$ has exactly one multiple fiber $m_1R$ with multiplicity $m_1$ and $(m_1-1)R\in |K_{G_0}|$ is the unique element. 
Since $2K_X$ is free and defines the Iitaka fibration of $X$, $\phi^*(2K_X)|_G$ is free and defines the Iitaka fibration of $G$, which implies that $\phi^*(2K_X)|_G\sim \sigma^*(km_1R)$ for some positive integer $k$. On the other hand, by the Zariski decomposition of $2K_G$, we know that 
 $\phi^*(2K_X)|_G\leq \sigma^*(2K_{G_0})\sim \sigma^*((2m_1-2)R)$. So we conclude that $k=1$ and $\phi^*(2K_X)|_G\sim \sigma^*(m_1R)$.
Fix an element $D\in |K_X|$, then we have
$2\sigma_*(\phi^*D|_G)\sim m_1R$. Since $X$ is Gorenstein, $\sigma_*(\phi^*D|_G)$ is a Weil divisor. Since $m_1R$ is the unique multiple fiber of $G_0$, we have 
$2\sigma_*(\phi^*D|_G)= m_1R$ and in particular $m_1$ is even. 
On the other hand, 
by Lemma~\ref{lem:compare KX KG}, 
we know that \[\phi^*K_X|_G-\frac{p_g-1}{p_g}\sigma^*K_{G_0}\sim_\bQ \left(\frac{m_1}{2}-\frac{p_g-1}{p_g}(m_1-1)\right)\sigma^*R\] is $\mathbb{Q}$-effective where $p_g\coloneq p_g(X)$, which implies that
\[
\frac{m_1}{2}-\frac{p_g-1}{p_g}(m_1-1)\geq 0.
\]
Hence we get 
\[
m_1= \begin{cases} 2 \text{ or }4 &\text{if } p_g(X)=3;\\
2 &\text{if } p_g(X)\geq 4.
\end{cases}
\]
\end{proof}
\section{Examples}

\subsection{Examples of $3$-folds of Kodaira dimension $2$ obtained from (bi)double covers}\ 

In this subsection, we construct examples of minimal projective $3$-folds of Kodaira dimension $2$, 
which illustrate the optimality of our Noether-type inequality.

For a non-negative integer $e$, let $\mathbb{F}_e=\mathbb{P}_{\mathbb{P}^1}(\mathcal{O}_{\mathbb{P}^1}\oplus \mathcal{O}_{\mathbb{P}^1}(-e))$ be the $e$-th Hirzebruch surface. 
Let $\ell$ be the fiber class of $\mathbb{F}_e$ and let $\sigma_0$ be the tautological class on $\mathbb{F}_e$ such that 
$\sigma_0^2=-e$. 
We have $K_{\mathbb{F}_e}=-2\sigma_0-(e+2)\ell$. We will freely use facts about Hirzebruch surfaces in \cite[\S V.2]{Hartshorne1977}.

First, we construct smooth minimal projective $3$-folds $X$ with $\kappa(X)=2$ such that $\Ivol(X)=p_g(X)-2$ and $|K_X|$ is not composed with a pencil of surfaces, where $p_g(X)$ can take any integer in $\mathbb{Z}_{\geq 3}$.

\begin{example}\label{example: not a pencil}
Fix nonnegative integers $e,k$ such that $k\geq 3e,k\geq e+3$ and $e\geq 1$. 
Consider $Y\coloneqq \mathbb{P}^1\times \mathbb{F}_e$ with the natural projections $\pi_1$ and $\pi_2$. 
Take $L=\pi_2^*(3\sigma_0+k\ell)-\pi_1^*K_{\mathbb{P}^1}$ on $Y$ which is base-point free. 
 Let $\psi:X\to Y$ be the double cover branched over a general member $B\in|2L|$. 
We have 
\[K_X=\psi^*(K_Y+L)=\psi^*\pi_2^*(\sigma_0+(k-e-2)\ell).\] 
Hence $X$ is a smooth minimal projective $3$-fold with $\kappa(X)=2$ whose Iitaka fibration is just $\pi_2\circ \psi: X\to \mathbb{F}_e$.
Then $|K_X|$ is not composed with a pencil of surfaces and
we have $\Ivol(X)=2k-3e-4$ and $p_g(X)=2k-3e-2$. In particular, $\Ivol(X)=p_g(X)-2$ and $p_g(X)$ can take any integer in $\mathbb{Z}_{\geq 3}$. 
\end{example}

Then, we construct smooth minimal projective $3$-folds $X$ with $\kappa(X)=2$ such that $\Ivol(X)=\frac{3}{4}p_g(X)-\frac{5}{4}$ and $|K_X|$ is composed with a pencil of surfaces, where $p_g(X)$ can take any odd integer in $\mathbb{Z}_{\geq 3}$.

\begin{example}\label{example: pencil}
Fix an integer $e{\geq 2}$. 
Consider $Y\coloneqq \mathbb{P}^1\times \mathbb{F}_{2e}$ with natural projections $\pi_1$ and $\pi_2$. We denote the divisor class of $\pi_1^*\mathcal{O}_{\mathbb{P}^1}(a)\otimes \pi_2^*\mathcal{O}_{\mathbb{F}_{2e}}(b\sigma_0+c\ell)$ by $(a,b\sigma_0+c\ell)$ for simplicity. For example, $K_Y=(-2,-2\sigma_0-(2e+2)\ell)$.

Take $3$ divisor classes 
\begin{align*}
 &L_1=(1,\sigma_0+e\ell);\\
 &L_2=(1,2\sigma_0+3e\ell);\\
 &L_3=(2,2\sigma_0+4e\ell)
\end{align*}
on $Y$. 
Take general elements 
\begin{align*}
 & D_1\in |L_2+L_3-L_1|=|(2,3\sigma_0+6e\ell)|;\\ &D_2\in |L_1+L_3-L_2|=|(2,\sigma_0+2e\ell)|;\\
 &D_3\in |L_1+L_2-L_3|=|(0,\sigma_0)|.
\end{align*}
Here the first $2$ linear systems are base-point free and $D_3$ is smooth. Hence $D_1+D_2+D_3$ is a simple normal crossing divisor. Moreover, \[(D_1\cdot D_2\cdot D_3)=8((\sigma_0+2e\ell)\cdot \sigma_0)=0,\] hence $D_1\cap D_2\cap D_3=\emptyset$.

By \cite[Theorem~2]{Catanese1999}, there is a finite flat $(\mathbb{Z}/2\mathbb{Z})^2$-Galois cover $\psi:X\to Y$ such that $X$ is smooth. Moreover, by \cite[Remark~4]{Catanese1999}, we have \[2K_X=\psi^*(2K_Y+L_1+L_2+L_3)=\psi^*((0,\sigma_0+(4e-4)\ell)).\]
Hence $X$ is a smooth minimal projective $3$-fold with 
$\kappa(X)=2$, whose Iitaka fibration is just $\pi_2\circ \psi: X\to \mathbb{F}_{2e}$. We have
\[\Ivol(X)=\frac{1}{4}(\sigma_0+(4e-4)\ell)^2=\frac{3}{2}e-2.\]
By \cite[p.~102]{Catanese1999}, we have \[p_g(X)=p_g(Y)+\sum_{i=1}^3h^0(Y,K_Y+ L_i)=2e-1\] 
and the canonical map is $X\to \mathbb{F}_{2e}\to \mathbb{P}^1$ where the second morphism is induced by $(2e-2)\ell$. 
In particular, $\Ivol(X)=\frac{3}{4}p_g(X)-\frac{5}{4}$.
\end{example}

Since we are interested in the geography of minimal projective $3$-folds $X$ with $\kappa(X)=2$, it is natural to ask whether $\Ivol(X)$ can be bounded from above by a function of $p_g(X)$. The answer is simply no, by the following easy construction.

\begin{proposition}\label{prop: non-existence of reversed inequality}
For any integer $p\geq 2$, there is an infinite sequence of smooth minimal projective $3$-folds $\{X_i\}$ with $\kappa(X_i)=2$ and $p_g(X_i)=p$ such that $\lim_{i\to \infty}\Ivol(X_i)=\infty$. 
\end{proposition}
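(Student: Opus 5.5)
The plan is an explicit construction modelled on Example~\ref{example: not a pencil}: a double cover of $\mathbb{P}^1\times S$ for a rational surface $S$, in which the geometric genus stays fixed while the Iitaka volume grows. The simplest choice is $Y=\mathbb{P}^1\times\mathbb{P}^2$ with projections $\pi_1,\pi_2$. Fix $d\geq 2$ and $n\geq 1$, and set
\[
L=\pi_1^*\mathcal{O}_{\mathbb{P}^1}(a)\otimes\pi_2^*\mathcal{O}_{\mathbb{P}^2}(b).
\]
Let $\psi:X\to Y$ be the double cover branched over a general smooth member $B\in|2L|$; since $|2L|$ is base point free, $X$ is smooth. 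Then
\[
K_X=\psi^*(K_Y+L)=\psi^*\bigl((a-2,\,b-3)\bigr).
\]
To make $\kappa(X)=2$ with Iitaka fibration onto $\mathbb{P}^2$, we need the $\mathbb{P}^1$-degree to vanish, so we take $a=2$. Then $K_X=\psi^*\pi_2^*\mathcal{O}(b-3)$. This is semiample, so $X$ is minimal, and for $b\geq 4$ its Iitaka fibration is $\pi_2\circ\psi$. Consequently
\[
\Ivol(X)=(b-3)^2,
\]
because $K_X\sim f^*\mathcal{O}(b-3)$ and $f$ has connected fibers, namely genus one double covers of $\mathbb{P}^1$ branched at four points.

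The difficulty is that $p_g(X)=h^0(Y,K_Y)+h^0(Y,K_Y+L)=h^0(\mathbb{P}^2,\mathcal{O}(b-3))$ grows with $b$. So this naive family fails, and the construction has to decouple the volume from $p_g$. The fix is to make $K_X$ a pullback of a divisor on the base that has large self-intersection but few sections, for instance a $\mathbb{Q}$-divisor arising from multiple fibers or branch contributions. Concretely, I would use a bidouble cover, as in Example~\ref{example: pencil}, of $\mathbb{P}^1\times\mathbb{F}_e$. Using Catanese's formulas from \cite{Catanese1999}, choose $L_1,L_2,L_3$ with the following properties: $p_g(X)=p_g(Y)+\sum_i h^0(Y,K_Y+L_i)$ equals $p$; $2K_X=\psi^*\bigl((0,D)\bigr)$ for some $D$ on $\mathbb{F}_e$; and $D^2\to\infty$ along a sequence.

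The mechanism is that sections of $K_Y+L_i$ vanish whenever the $\mathbb{P}^1$-degree of $L_i$ is at most $1$. One can therefore put all of the large $\mathbb{F}_e$-degree into summands $L_i$ of $\mathbb{P}^1$-degree $1$, which contribute nothing to $p_g$, while $2K_Y+\sum L_i$ has total $\mathbb{P}^1$-degree $0$. For example, take $L_1=(1,A_i)$, $L_2=(1,B_i)$ and $L_3=(2,C)$, where $C$ is chosen so that $h^0(K_Y+L_3)=h^0(\mathbb{F}_e,K_{\mathbb{F}_e}+C)=p$ and $A_i,B_i$ are increasingly positive. Then $p_g(X)=p$ for all $i$, and
\[
\Ivol(X)=\tfrac14\bigl(2K_{\mathbb{F}_e}+A_i+B_i+C\bigr)^2\to\infty.
\]

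Several checks remain. The branch divisors $D_1\in|L_2+L_3-L_1|$, $D_2\in|L_1+L_3-L_2|$ and $D_3\in|L_1+L_2-L_3|$ must be effective and general, with simple normal crossings and no triple points, so that $X$ is smooth. Effectivity of $D_3=(0,A_i+B_i-C)$ is automatic, but $D_1=(2,B_i+C-A_i)$ and $D_2=(2,A_i+C-B_i)$ force $A_i$ and $B_i$ to grow together. I would take $B_i=A_i$, so that $D_1,D_2\in|(2,C)|$ and $D_3\in|(0,2A_i-C)|$. Here $C$ is chosen very ample with $h^0(K_{\mathbb{F}_e}+C)=p$, for instance on $\mathbb{P}^1\times\mathbb{P}^1$, and $A_i$ runs through multiples of an ample class. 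The three branch divisors are then base point free or very ample and can be taken general, giving an snc branch locus; if $D_3$ is very ample, generic position is what removes triple points. Finally, $2K_X=\psi^*(0,\,2K_{\mathbb{F}_e}+2A_i+C)$ is nef and a pullback from the surface, with self-intersection growing like $A_i^2$. This gives minimality, $\kappa=2$ and $\Ivol\to\infty$.

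The main obstacle I expect is arranging the numerology so that the branch divisors are simultaneously effective and in general position while $p$ stays fixed. Choosing $L_1=L_2$ is the device that handles this.
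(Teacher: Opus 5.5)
\textbf{The construction fails at the smoothness step.}

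Catanese's criterion, as used in Example~\ref{example: pencil}, needs two things: $D_1+D_2+D_3$ must be snc, and $D_1\cap D_2\cap D_3$ must be empty. On a threefold, general position does \emph{not} give the second condition. General members of base point free systems meet transversally in exactly $D_1\cdot D_2\cdot D_3$ points, so emptiness forces $D_1\cdot D_2\cdot D_3=0$. This is why the paper checks $(D_1\cdot D_2\cdot D_3)=8((\sigma_0+2e\ell)\cdot\sigma_0)=0$ in that example. With your data,
\[
D_1\cdot D_2\cdot D_3=(2,C)^2\cdot(0,2A_i-C)=4\,C\cdot(2A_i-C)>0 .
\]
The term $C^2\cdot(2A_i-C)$ drops out as a triple product of classes pulled back from a surface. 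Positivity holds because $C$ is ample and $2A_i-C$ is effective and nonzero once $A_i$ grows with $C$ fixed.

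At each of these points the bidouble cover is locally $\mathbb{C}^3/\{\pm1\}$, a $\tfrac12(1,1,1)$ point. So your $X$ is a terminal minimal $3$-fold with the right $p_g$, $\kappa$ and $\Ivol$, but it is not smooth. Blowing up these points gives a smooth model on which $K$ has degree $-1$ on the lines of each exceptional $\mathbb{P}^2$, so that model is not minimal. Separately, $D_3$ is pulled back from $\mathbb{F}_e$, so it is never very ample on $Y$.

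\textbf{What a repair needs, and how the paper does it.} For $L_1=(1,A)$, $L_2=(1,B)$, $L_3=(2,C)$ the triple product is $4\,C\cdot(A+B-C)$, so you need $C\cdot(A_i+B_i-C)=0$. If $C$ is fixed, nef and big, this confines $A_i+B_i-C$ to the negative definite lattice of $C$-trivial curves. Then $\frac14(2K_S+2C+(A_i+B_i-C))^2$ is bounded above, so $C$ has to be something like a fiber class.

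One repair: let $S$ be the blowup of $\mathbb{P}^1\times\mathbb{P}^1$ at the base points of a general pencil in $|\mathcal{O}(2,p+1)|$, with fiber class $F$ of genus $p$. Take $L_1=L_2=(1,iF)$ and $L_3=(2,F)$ on $\mathbb{P}^1\times S$.
\begin{enumerate}
\item All branch data are pulled back from $\mathbb{P}^1\times\mathbb{P}^1$, so they can be made snc with no triple points.
\item $p_g=h^0(S,K_S+F)=p$.
\item $K_S+F$ is nef, so $\Ivol=\frac14(2K_S+(2i+1)F)^2=2(2i-1)(p-1)\to\infty$.
\end{enumerate}

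The paper's route is simpler. It takes $X_i$ to be the product of a fixed genus-$p$ curve with minimal surfaces $S_i$ having $p_g=1$, $\kappa=1$ and $\Ivol(S_i)\to\infty$, taken from \cite{CHL2025}. Smoothness and minimality are then automatic, K\"unneth gives $p_g(X_i)=p$, and $\Ivol(X_i)=4(p-1)\Ivol(S_i)$. A repaired cover construction would avoid the outside input on elliptic surfaces, but only at the cost of these transversality checks.
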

\begin{proof}
By \cite[Theorem~4.16]{CHL2025}, there exists a sequence of minimal projective surfaces $\{S_i\}$ such that $p_g(S_i)=1$, $\kappa(S_i)=1$, and $\lim_{i\to \infty}\Ivol(S_i)=\infty$. 
Let $C$ be a smooth projective curve of genus $p_g(C)=p$ and let $X_i\coloneqq C\times S_i$. 
Then $X_i$ is a smooth minimal projective $3$-fold with $\kappa(X_i)=2$.
By the K\"unneth formula, we have $p_g(X_i)=p$ and
\[
\Ivol(X_i)=\limsup_{m\to \infty} \frac{h^0(C,\omega_C^{\otimes m})\cdot h^0(S_i,\omega_{S_i}^{\otimes m})}{m^2/2}=4(p-1) \Ivol(S_i).
\]
This gives the desired sequence of examples. 
\end{proof}

\subsection{Examples of $3$-folds of Kodaira dimension $2$ obtained from weighted blowups} \ 

In this subsection, 
we give a formula for the Iitaka volumes of minimal projective $3$-folds of Kodaira dimension $2$ constructed in \cite[Lemma~4.4, Theorem~4.5]{CJL2024}. Those are infinite series of examples constructed from weighted blowups of weighted hypersurfaces. The following proposition applies to all examples in \cite[Lemma~4.4, Theorem~4.5]{CJL2024}.

\begin{proposition}\label{prop: CJL Y to Z}
Let $X\subseteq \mathbb{P}(b_1,\cdots,b_{5})$ be a $3$-dimensional well-formed quasismooth general weighted hypersurface of degree $d$. 
Denote by $x_1,\cdots,x_{5}$ the homogeneous coordinates of $\mathbb{P}(b_1,\cdots,b_{5})$. Suppose that $X$ has a cyclic quotient singularity at the point $Q=(x_1=x_2=x_3=0)\cap X$ of type $\frac{1}{r}(b_1, b_2, e_3)$ where $x_1,x_2,x_3$ are the local coordinates of $Q$ with weights $\frac{b_1}{r},\frac{b_2}{r},\frac{e_3}{r}$ respectively. 
Let $\pi:Y\to X$ be the weighted blowup of $X$ at $Q$ with weight $(b_1,b_2,e_3)$. 
Suppose that the following conditions hold:
\begin{enumerate}
 \item $b_1, b_2, e_3$ are coprime to each other;
 \item $e_3=b_3-ub_4-vb_5$ for some nonnegative integers $u,v$;
 \item $dre_3=b_3b_4b_5$;
 \item $ d-\sum_{i=1}^{5}b_i=r-b_1-b_2-e_3>0$.
\end{enumerate}
Then there is a fibration $f:Y\to \mathbb{P}(b_1,b_2,e_3)$ with \[K_Y\sim_\mathbb{Q}\left(d-\sum_{i=1}^{5}b_i\right)f^*(H),\]
where $H$ is the divisor class corresponding to $\mathcal{O}_{\mathbb{P}(b_1,b_2,e_3)}(1)$.

In particular, if $Y$ has terminal singularities, then $Y$ is a minimal projective $3$-fold with $\kappa(Y)=2$ and $\Ivol(Y)=\frac{(d-\sum_{i=1}^{5}b_i)^2}{b_1b_2e_3}$.
\end{proposition}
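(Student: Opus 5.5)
The plan is to show that the rational map $X\dashrightarrow \mathbb{P}(b_1,b_2,e_3)$, $(x_1,\dots,x_5)\mapsto (x_1,x_2,x_3')$, where $x_3'$ is a suitable modification of $x_3$ of weight $e_3$, becomes a morphism on $Y$. After that I would identify $K_Y$ as a pullback of $\mathcal{O}(d-\sum b_i)$ and read off the Iitaka volume.

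\textbf{Step 1: construct the coordinate and the map.} By condition (2), $x_3'\coloneqq x_3/(x_4^ux_5^v)$ is a rational function of weight $b_3-ub_4-vb_5=e_3$. Near $Q$, where $x_4,x_5$ are nonvanishing (as $Q=(x_1=x_2=x_3=0)$), $x_3'$ is a local orbifold coordinate of weight $e_3/r$. So $(x_1:x_2:x_3')$ defines a rational map $X\dashrightarrow \mathbb{P}(b_1,b_2,e_3)$. It is undefined exactly where $x_1=x_2=x_3=0$, which on $X$ consists of $Q$ together with the locus $x_4x_5=0$ inside it. By quasismoothness and genericity, $(x_1=x_2=x_3=0)\cap X$ should be just the point $Q$. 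The weighted blowup with weights $(b_1,b_2,e_3)$ in the local coordinates $x_1,x_2,x_3$ is exactly the graph closure of this map near $Q$. On the exceptional divisor $E\cong\mathbb{P}(b_1,b_2,e_3)$ the induced map is the identity, using condition (1), coprimality, for well-formedness. Hence we get a morphism $f:Y\to \mathbb{P}(b_1,b_2,e_3)$, and $f|_E$ is birational, so $f$ is surjective.

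\textbf{Step 2: connected fibres and dimension count.} Generic fibres are curves cut out by two weighted relations $x_1^{b_2}\lambda=x_2^{b_1}\mu$, etc. I would check connectedness via Stein factorization, which I expect to be forced since $E$ is a section-like divisor meeting each fibre once. Condition (3), $dre_3=b_3b_4b_5$, is the numerical statement $(\pi^*A)^3$-type intersection: the degree computation $\deg X/(b_1\cdots b_5)-1/(rb_1b_2e_3)\cdot(\cdots)$ shows $f^*H_1\cdot f^*H_2\cdot f^*H_3=0$. This forces $\dim f(Y)=2$ with positive-dimensional fibres, and degree one on $E$ gives connectedness.

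\textbf{Step 3: canonical class.} This is the main obstacle. Let $A=\mathcal{O}_X(1)$. Then $K_X=(d-\sum b_i)A$ by adjunction, and $K_Y=\pi^*K_X+\frac{b_1+b_2+e_3-r}{r}E$. Also $f^*H\sim_\bQ\pi^*A-\frac1r E$, since $x_1$ vanishes on $E$ to order $b_1/r$ and $x_1$ defines $b_1f^*H$. Therefore $K_Y=(d-\sum b_i)(\pi^*A)-\frac{r-b_1-b_2-e_3}{r}E$, and condition (4) gives exactly $K_Y\sim_\bQ (d-\sum b_i)f^*H$. The delicate point is verifying the vanishing order of the pulled-back coordinates, i.e.\ that $f^*\mathcal{O}(b_1)=\pi^*\mathcal{O}(b_1)-\frac{b_1}{r}E$ with no extra components. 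This needs $x_1$ to have no extra zeros away from $Q$ beyond its strict transform.

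\textbf{Step 4: conclusion.} If $Y$ is terminal, then $K_Y$ is semiample as the pullback of an ample $\bQ$-divisor by $(4)$. So $Y$ is minimal and $f$ is its Iitaka fibration, giving $\kappa(Y)=2$ and $\Ivol(Y)=(d-\sum b_i)^2H^2=(d-\sum b_i)^2/(b_1b_2e_3)$.
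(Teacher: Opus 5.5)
Your proof is correct in outline and shares the paper's frame: the map $[x_1:x_2:x_3x_4^{-u}x_5^{-v}]$ is resolved by $\pi$, and then $K_Y=\pi^*K_X-\frac{r-b_1-b_2-e_3}{r}E$ is combined with (4). The difference is in how you determine the key coefficient.

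The paper writes $\pi^*D=f^*D_Z+tE$ with $D=(x_1=0)\cap X$ and finds $t$ globally. Since $f$ maps to a surface, $0=(f^*D_Z)^3=D^3-t^3E^3$. With $E^3=r^2/(b_1b_2e_3)$, condition (3) is exactly what yields $t=b_1/r$. You instead read $t=b_1/r$ off the weight of $x_1$ in the blowup. That is more direct, and it shows (3) is a consequence rather than an input: $(\pi^*H_X-\frac1rE)^3=(f^*H)^3=0$ is equivalent to $dre_3=b_3b_4b_5$. The paper's route avoids any local vanishing-order computation, at the cost of using $E^3$ and (3).

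Accordingly, your use of (3) in Step 2 is misplaced. $\dim f(Y)=2$ is automatic because $f$ is surjective onto a surface, and (3) is used nowhere else in your argument. Your connectedness argument is fine: the Stein factorization is trivial because $E\to Z$ is an isomorphism. It makes explicit what the paper leaves implicit in ``induces a fibration''.

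Both routes rest on the same input: $\pi^*D-f^*D_Z$ is supported on $E$, i.e.\ $f^*D_Z=\widetilde D$. You flag this; the paper asserts it ``by the construction of $p$''. It holds because on $Y\setminus E\cong X\setminus Q$ the pullback of $(y_1=0)$ is $(x_1=0)$, and $E\not\subset f^*D_Z$ since $f|_E$ is an isomorphism. This requires $p$ to be a morphism off $Q$.

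That last condition is where your Step 1 is inaccurate. If $u>0$, then $x_3x_4^{-u}x_5^{-v}$ is indeterminate along the curve $X\cap(x_3=x_4=0)$. It also sends $(x_4=0)\cap X$ to $[0:0:1]\in D_Z$; the case $v>0$ is similar. So $p$ would not be a morphism off $Q$, and $D_Z$ would pull back with extra components. The paper's one-line claim that $p$ is well defined outside $Q$ has the same soft spot.

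It is harmless under the hypotheses as stated. Suppose $(x_1=x_2=x_3=0)\cap X$ is literally the single point $Q$. Then $Q$ is not a vertex: at $P_5$, having $x_1,x_2,x_3$ as local coordinates forces a monomial $x_4x_5^k$, which produces a second point on $\mathbb{P}(b_4,b_5)$. Hence $r=\gcd(b_4,b_5)$, and by uniqueness and generality $d=\operatorname{lcm}(b_4,b_5)$. Then (3) forces $e_3=b_3$, i.e.\ $u=v=0$. The map is just $[x_1:x_2:x_3]$, and both arguments go through as written.
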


\begin{proof}
Consider the following dominant rational map 
\begin{align*}
\mathbb{P}(b_1,b_2,b_3,b_4,b_5)&\dashrightarrow \mathbb{P}(b_1,b_2,e_3),\\
[x_1:x_2:x_3:x_4:x_5]&\mapsto [x_1:x_2:x_3{x_4^{-u}x_5^{-v}}].
\end{align*}
Let $p:X\dashrightarrow \mathbb{P}(b_1,b_2,e_3)$ be the induced map. Then $p$ is well-defined outside 
$Q=(x_1=x_2=x_3=0)\cap X$. 
The weighted blowup resolves the indeterminacy of $p$ and induces a fibration $f:Y\to \mathbb{P}(b_1,b_2,e_3)\eqqcolon
 Z$.
Denote by $E\simeq \mathbb{P}(b_1, b_2, e_3)$ the exceptional divisor of $\pi$, then $\mathcal{O}_Y(E)|_E=\mathcal{O}_E(-r)$ by \cite[Proposition~2.9]{CJL2024}. 
Take $H_X$ and $H_Z$ to be the divisor classes corresponding to $\mathcal{O}_X(1)$ and $\mathcal{O}_Z(1)$. 
Take $D=(x_1=0)\cap X\in |b_1H_X|$ and $D_Z=(x_1=0)\cap Z\in |b_1H_Z|$, then by the construction of $p$, we have $\pi^*D=f^*D_Z+tE$ for some rational number $t$. 
Note that 
\[
0=(f^*D_Z)^3=(\pi^*D-tE)^3=D^3-t^3E^3=\frac{b_1^3d}{b_1b_2b_3b_4b_5}- \frac{t^3r^2}{b_1b_2e_3}.
\]
Hence $t=\frac{b_1}{r}\sqrt[3]{\frac{dre_3}{b_3b_4b_5}}=\frac{b_1}{r}$.
This implies that
\[\pi^*H_X\sim_\mathbb{Q}f^*H_Z+\frac{1}{r}E.\] 
Then by \cite[Proposition~2.9]{CJL2024},
\begin{align*}
K_Y=&\pi^*K_X-\frac{r-b_1-b_2-e_3}{r}E\\
\sim_\mathbb{Q}&\left(d-\sum_{i=1}^{5}b_i\right)\pi^* H_X-\frac{r-b_1-b_2-e_3}{r}E\\
\sim_\mathbb{Q} & \left(d-\sum_{i=1}^{5}b_i\right)\left(\pi^* H_X-\frac{1}{r}E\right)\\
\sim_\mathbb{Q} &\left(d-\sum_{i=1}^{5}b_i\right)f^*H_Z.
\end{align*}
The last statement follows from the fact that $H_Z$ is ample and $H_Z^2=\frac{1}{b_1b_2e_3}$.
\end{proof}

\subsection{Examples of varieties of Kodaira dimension $1$}\ 

The following examples of smooth varieties of Kodaira dimension $1$ illustrate the optimality of our Noether-type inequality. 
\begin{example}\label{example: kodaira dim 1}
Fix an integer $n\geq 2$.
Consider $Y\coloneqq \mathbb{P}^1\times \mathbb{P}^{n-1}$ with natural projections $\pi_1$ and $\pi_2$.
Fix an integer $m\geq 3$ and take a base-point free divisor $L\coloneqq m\pi_1^*P-\pi_2^*K_{\mathbb{P}^{n-1}}$, where $P\in \mathbb{P}^1$ is a point. Let $\psi:X\to Y$ be the double cover branched over a general member $B\in|2L|$. 
We have \[K_X=\psi^*(K_Y+L)=(m-2)\psi^*\pi_1^*P.\] 
Hence $X$ is a smooth minimal projective variety of dimension $n$ with $\kappa(X)=1$ whose Iitaka fibration is just $\pi_1\circ \psi: X\to \mathbb{P}^1$.
We know that $p_g(X)=m-1$ and $\Ivol(X)=m-2$. 
\end{example}

Similar to Proposition~\ref{prop: non-existence of reversed inequality}, we have the following:

\begin{proposition} 
For integers $p\geq 0$ and $n\geq 2$, there is an infinite sequence of smooth minimal projective $n$-folds $\{X_i\}$ with $\kappa(X_i)=1$ and $p_g(X_i)=p$ such that $\lim_{i\to \infty}\Ivol(X_i)=\infty$. 
\end{proposition}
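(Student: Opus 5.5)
The plan is to imitate the proof of Proposition~\ref{prop: non-existence of reversed inequality}: take a product of a fixed variety that controls $p_g$ with a sequence of elliptic surfaces whose Iitaka volume tends to infinity. The genus of the base factor should be absorbed by the geometric genus, while the volume grows through the surface factor.

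\textbf{Case $p\geq 1$.} By \cite[Theorem~4.16]{CHL2025} there are minimal surfaces $S_i$ with $p_g(S_i)=1$, $\kappa(S_i)=1$ and $\Ivol(S_i)\to\infty$. Let $W$ be a smooth minimal projective $(n-2)$-fold with $K_W\sim 0$, for instance an abelian variety (a point if $n=2$), so that $p_g(W)=1$ and $h^0(W,mK_W)=1$ for all $m$. Let $C$ be an elliptic curve, and set $X_i\coloneqq S_i\times W$.

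This product has $\kappa(X_i)=1$, and $K_{X_i}$ is semiample, so $X_i$ is minimal. By the Künneth formula, $p_g(X_i)=1$ and
\[
h^0(X_i,mK_{X_i})=h^0(S_i,mK_{S_i}),
\]
so $\Ivol(X_i)=\Ivol(S_i)\to\infty$. This handles $p=1$, but it does not adjust $p_g$.

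To get general $p\geq 1$, I would instead modify the surfaces. Replace $S_i$ by elliptic surfaces over $\mathbb{P}^1$ with $\chi(\mathcal{O})=p+1$, $q=0$ (so $p_g=p$), and $r_i\to\infty$ multiple fibres. By the formula in the proof of Lemma~\ref{lem: Ivol S<1}, such a surface has
\[
\Ivol=-2+p+1+\sum\frac{m_j-1}{m_j}\to\infty.
\]
Such surfaces exist by logarithmic transformations; I expect \cite[Theorem~4.16]{CHL2025} or Dolgachev-type constructions to provide them. One caveat is that logarithmic transforms can change $q$ or make the surface non-algebraic. Algebraicity holds when the Jacobian fibration is algebraic and the transforms come from torsion data, but I need a reference that also keeps $q=0$. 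Taking the product with $W$ as above then gives $p_g(X_i)=p$.

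\textbf{Case $p=0$.} Here I need elliptic surfaces with $p_g=0$, $\kappa=1$ and unbounded $\Ivol$. Dolgachev-type surfaces, with $\chi=1$ over $\mathbb{P}^1$ and many multiple fibres, have
\[
\Ivol=-1+\sum\frac{m_j-1}{m_j}\to\infty
\]
as the number of multiple fibres grows. Taking the product with an abelian variety $W$ gives $p_g=0$.

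The case $n=2$ is just the surfaces themselves. For $n\geq 3$, the Künneth computation $h^0(mK_{X_i})=h^0(mK_{S_i})\cdot h^0(mK_W)=h^0(mK_{S_i})$ settles everything.

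\textbf{Main obstacle.} The main difficulty is the existence of algebraic elliptic surfaces with prescribed $p_g=p$, $q=0$, and an arbitrarily large number of multiple fibres. I would first cite \cite{CHL2025}, hoping its construction allows any $p_g$. Failing that, I would use the Dolgachev construction: quotient a suitable product (or a suitable Horikawa-type surface) by a finite group acting freely so as to produce many multiple fibres.
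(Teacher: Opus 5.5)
Your proposal is essentially the paper's proof: take $X_i=A\times S_i$ with $A$ an abelian $(n-2)$-fold, and use the Künneth formula to get $p_g(X_i)=p_g(S_i)$ and $\Ivol(X_i)=\Ivol(S_i)$. The paper takes surfaces with $p_g(S_i)=p$ for every $p\geq 0$ directly from \cite[Theorem~4.16]{CHL2025}, so your case split and the logarithmic-transform detour are unnecessary. Your worry about $q$ is also moot: $\chi(\mathcal{O})=p+1>0$ forces $q=g(\mathbb{P}^1)=0$ for an elliptic fibration over $\mathbb{P}^1$.
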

\begin{proof}
By \cite[Theorem~4.16]{CHL2025}, there exists a sequence of minimal projective surfaces $\{S_i\}$ such that $p_g(S_i)=p$, $\kappa(S_i)=1$, and $\lim_{i\to \infty}\Ivol(S_i)=\infty$. So these provide examples for $n=2$. If $n>2$,
let $A$ be an Abelian variety of dimension $n-2$ and let $X_i\coloneqq A\times S_i$. 
Then $X_i$ is a smooth minimal projective $n$-fold with $\kappa(X_i)=1$.
By the K\"unneth formula, we have $p_g(X_i)=p$ and
\[
\Ivol(X_i)=\limsup_{m\to \infty} \frac{h^0(A,\omega_A^{\otimes m})\cdot h^0(S_i,\omega_{S_i}^{\otimes m})}{m}= \Ivol(S_i).
\]
This gives the desired sequence of examples. 
\end{proof}

Naturally, out of curiosity about the statement of Theorem~\ref{thm:main1}, we would like to pose the following question: 
\begin{op}
Do optimal Noether-type inequalities exist for higher dimensional non-general type varieties of Kodaira dimension $\kappa>1$?
\end{op}

\section*{Acknowledgments}
We would like to thank Jungkai Chen and Tong Zhang for their interest in this problem. 
 
 This work was supported by National Key Research and Development Program of China \#2023YFA1010600 and NSFC for Innovative Research Groups \#12121001. 
 Hu was supported by NSFC grant \#12571044 and NSFC for Young Scientists Fund (B) \#12622102. 
 Chen and Jiang are members of the Key Laboratory of Mathematics for Nonlinear Sciences, Fudan University.

The main results of this paper was obtained around April 2026. All mathematical results in this paper were achieved by the authors without any assistance of AI tools. 


\bibliographystyle{amsplain}

\bibliography{reference}

@book{Hartshorne1977,
    AUTHOR = {Hartshorne, Robin},
     TITLE = {Algebraic geometry},
    SERIES = {Graduate Texts in Mathematics},
    VOLUME = {52},
 PUBLISHER = {Springer-Verlag, New York-Heidelberg},
      YEAR = {1977},
     PAGES = {xvi+496},
      ISBN = {0-387-90244-9},
   MRCLASS = {14-01},
  MRNUMBER = {463157},
MRREVIEWER = {Robert\ Speiser},
}

@book{lazarsfeld2004-II,
AUTHOR = {Lazarsfeld, Robert},
     TITLE = {Positivity in algebraic geometry. {II}},
    SERIES = {Ergebnisse der Mathematik und ihrer Grenzgebiete. 3. Folge. A
              Series of Modern Surveys in Mathematics [Results in
              Mathematics and Related Areas. 3rd Series. A Series of Modern
              Surveys in Mathematics]},
    VOLUME = {49},
      NOTE = {Positivity for vector bundles, and multiplier ideals},
 PUBLISHER = {Springer-Verlag, Berlin},
      YEAR = {2004},
     PAGES = {xviii+385},
      ISBN = {3-540-22534-X},
   MRCLASS = {14-02 (14C20 14F05 14F17)},
  MRNUMBER = {2095472},
MRREVIEWER = {Mihnea\ Popa},
       DOI = {10.1007/978-3-642-18808-4},
       URL = {https://doi.org/10.1007/978-3-642-18808-4},
}

@book{Kawamata_2024,  
    AUTHOR = {Kawamata, Yujiro},
     TITLE = {Algebraic varieties: minimal models and finite generation},
    SERIES = {Cambridge Studies in Advanced Mathematics},
    VOLUME = {214},
      NOTE = {Translated from the 2014 Japanese original by Chen Jiang},
 PUBLISHER = {Cambridge University Press, Cambridge},
      YEAR = {2024},
     PAGES = {x+251},
      ISBN = {978-1-009-34467-8; [9781009344647]},
   MRCLASS = {14E30},
  MRNUMBER = {4749084},
MRREVIEWER = {Izzet\ Coskun},
}

@article{hacon2007,
  AUTHOR = {Hacon, Christopher D. and M\textsuperscript{c}Kernan, James},
     TITLE = {On {S}hokurov's rational connectedness conjecture},
   JOURNAL = {Duke Math. J.},
  FJOURNAL = {Duke Mathematical Journal},
    VOLUME = {138},
      YEAR = {2007},
    NUMBER = {1},
     PAGES = {119--136},
      ISSN = {0012-7094,1547-7398},
   MRCLASS = {14E30 (14E05 14J45)},
  MRNUMBER = {2309156},
MRREVIEWER = {Mihnea\ Popa},
       DOI = {10.1215/S0012-7094-07-13813-4},
       URL = {https://doi.org/10.1215/S0012-7094-07-13813-4},
}

@book{Kollar-Mori-1998,
    AUTHOR = {Koll\'ar, J\'anos and Mori, Shigefumi},
     TITLE = {Birational geometry of algebraic varieties},
    SERIES = {Cambridge Tracts in Mathematics},
    VOLUME = {134},
      NOTE = {With the collaboration of C. H. Clemens and A. Corti,
              Translated from the 1998 Japanese original},
 PUBLISHER = {Cambridge University Press, Cambridge},
      YEAR = {1998},
     PAGES = {viii+254},
      ISBN = {0-521-63277-3},
   MRCLASS = {14E30},
  MRNUMBER = {1658959},
MRREVIEWER = {Mark\ Gross},
       DOI = {10.1017/CBO9780511662560},
       URL = {https://doi.org/10.1017/CBO9780511662560},
}

@book{Harris1992AlgebraicGeometryFirstCourse,
    AUTHOR = {Harris, Joe},
     TITLE = {Algebraic geometry},
    SERIES = {Graduate Texts in Mathematics},
    VOLUME = {133},
      NOTE = {A first course},
 PUBLISHER = {Springer-Verlag, New York},
      YEAR = {1992},
     PAGES = {xx+328},
      ISBN = {0-387-97716-3},
   MRCLASS = {14-01},
  MRNUMBER = {1182558},
MRREVIEWER = {Liam\ O'Carroll},
       DOI = {10.1007/978-1-4757-2189-8},
       URL = {https://doi.org/10.1007/978-1-4757-2189-8},
}

@article {N1870,
    AUTHOR = {Noether, Max},
     TITLE = {Zur {T}heorie des eindeutigen {E}ntsprechens algebraischer
              {G}ebilde von beliebig vielen {D}imensionen},
   JOURNAL = {Math. Ann.},
  FJOURNAL = {Mathematische Annalen},
    VOLUME = {2},
      YEAR = {1870},
    NUMBER = {2},
     PAGES = {293--316},
      ISSN = {0025-5831,1432-1807},
   MRCLASS = {99-04},
  MRNUMBER = {1509663},
       DOI = {10.1007/BF01444023},
       URL = {https://doi.org/10.1007/BF01444023},
}

@article{CCJ1,
    AUTHOR = {Chen, Jungkai A. and Chen, Meng and Jiang, Chen},
     TITLE = {The {N}oether inequality for algebraic $3$-folds},
      NOTE = {With an appendix by J\'anos Koll\'ar},
   JOURNAL = {Duke Math. J.},
  FJOURNAL = {Duke Mathematical Journal},
    VOLUME = {169},
      YEAR = {2020},
    NUMBER = {9},
     PAGES = {1603--1645},
      ISSN = {0012-7094,1547-7398},
   MRCLASS = {14E05 (14J30)},
  MRNUMBER = {4105534},
MRREVIEWER = {Anne-Sophie\ Kaloghiros},
       DOI = {10.1215/00127094-2019-0080},
       URL = {https://doi.org/10.1215/00127094-2019-0080},
}

@article{CCJ2,
    AUTHOR = {Chen, Jungkai A. and Chen, Meng and Jiang, Chen},
     TITLE = {\phantom{}{A}ddendum to “{T}he {N}oether inequality for algebraic $3$-folds”},
   JOURNAL = {Duke Math. J.},
  FJOURNAL = {Duke Mathematical Journal},
    VOLUME = {169},
      YEAR = {2020},
    NUMBER = {11},
     PAGES = {2199–2204},
      ISSN = {0012-7094},
   MRCLASS = {14E05 (14J30)},
  MRNUMBER = {4132583},
MRREVIEWER = {Anne-Sophie\ Kaloghiros},
       DOI = {10.1215/00127094-2020-0042},
       URL = {http://dx.doi.org/10.1215/00127094-2020-0042}
}

@article{CHJ25,
AUTHOR = {Chen, Meng and Hu, Yong and Jiang, Chen},
     TITLE = {The {N}oether inequality for $3$-folds and three moduli spaces
              with minimal volumes},
   JOURNAL = {Proc. Lond. Math. Soc. (3)},
  FJOURNAL = {Proceedings of the London Mathematical Society. Third Series},
    VOLUME = {131},
      YEAR = {2025},
    NUMBER = {2},
     PAGES = {Paper No. e70078, 41 pp},
      ISSN = {0024-6115,1460-244X},
   MRCLASS = {14J30 (14D22 14J10)},
  MRNUMBER = {4942287},
MRREVIEWER = {Caryn\ Werner},
       DOI = {10.1112/plms.70078},
       URL = {https://doi.org/10.1112/plms.70078},
}

@book{BHPV2004,
   AUTHOR = {Barth, Wolf P. and Hulek, Klaus and Peters, Chris A. M. and
              Van de Ven, Antonius},
     TITLE = {Compact complex surfaces},
    SERIES = {Ergebnisse der Mathematik und ihrer Grenzgebiete. 3. Folge. A
              Series of Modern Surveys in Mathematics [Results in
              Mathematics and Related Areas. 3rd Series. A Series of Modern
              Surveys in Mathematics]},
    VOLUME = {4},
   EDITION = {Second},
 PUBLISHER = {Springer-Verlag, Berlin},
      YEAR = {2004},
     PAGES = {xii+436},
      ISBN = {3-540-00832-2},
   MRCLASS = {14Jxx (14-02 32-02 32J15 57R57)},
  MRNUMBER = {2030225},
MRREVIEWER = {I.\ Dolgachev},
       DOI = {10.1007/978-3-642-57739-0},
       URL = {https://doi.org/10.1007/978-3-642-57739-0},
}

@article{CHL2025,
  AUTHOR = {Chen, Guodu and Han, Jingjun and Liu, Wenfei},
     TITLE = {On the {I}itaka volumes of log canonical surfaces and
              threefolds},
   JOURNAL = {J. Lond. Math. Soc. (2)},
  FJOURNAL = {Journal of the London Mathematical Society. Second Series},
    VOLUME = {111},
      YEAR = {2025},
    NUMBER = {4},
     PAGES = {Paper No. e70132, 45 pp},
      ISSN = {0024-6107,1469-7750},
   MRCLASS = {14E30 (14B05 14J27)},
  MRNUMBER = {4885118},
       DOI = {10.1112/jlms.70132},
       URL = {https://doi.org/10.1112/jlms.70132},
}

@article{Serrano1991,
  AUTHOR = {Serrano, Fernando},
     TITLE = {The {P}icard group of a quasi-bundle},
   JOURNAL = {Manuscripta Math.},
  FJOURNAL = {Manuscripta Mathematica},
    VOLUME = {73},
      YEAR = {1991},
    NUMBER = {1},
     PAGES = {63--82},
      ISSN = {0025-2611,1432-1785},
   MRCLASS = {14C22},
  MRNUMBER = {1124311},
       DOI = {10.1007/BF02567629},
       URL = {https://doi.org/10.1007/BF02567629},
}

@article{Serrano1996,
  AUTHOR = {Serrano, Fernando},
     TITLE = {Isotrivial fibred surfaces},
   JOURNAL = {Ann. Mat. Pura Appl. (4)},
  FJOURNAL = {Annali di Matematica Pura ed Applicata. Serie Quarta},
    VOLUME = {171},
      YEAR = {1996},
     PAGES = {63--81},
      ISSN = {0003-4622},
   MRCLASS = {14J25},
  MRNUMBER = {1441865},
MRREVIEWER = {Jean\ D'Almeida},
       DOI = {10.1007/BF01759382},
       URL = {https://doi.org/10.1007/BF01759382},
}

@article{CJL2024,
   AUTHOR = {Chen, Meng and Jiang, Chen and Li, Binru},
     TITLE = {On minimal varieties growing from quasismooth weighted
              hypersurfaces},
   JOURNAL = {J. Differential Geom.},
  FJOURNAL = {Journal of Differential Geometry},
    VOLUME = {127},
      YEAR = {2024},
    NUMBER = {1},
     PAGES = {35--76},
      ISSN = {0022-040X,1945-743X},
   MRCLASS = {14E30},
  MRNUMBER = {4753498},
MRREVIEWER = {Sheng\ Meng},
       DOI = {10.4310/jdg/1717356154},
       URL = {https://doi.org/10.4310/jdg/1717356154},
}

@article{Kollar1986-I,
    AUTHOR = {Koll\'ar, J\'anos},
     TITLE = {Higher direct images of dualizing sheaves. {I}},
   JOURNAL = {Ann. of Math. (2)},
  FJOURNAL = {Annals of Mathematics. Second Series},
    VOLUME = {123},
      YEAR = {1986},
    NUMBER = {1},
     PAGES = {11--42},
      ISSN = {0003-486X,1939-8980},
   MRCLASS = {14J10 (14F10 14J30 14J40)},
  MRNUMBER = {825838},
MRREVIEWER = {Takao\ Fujita},
       DOI = {10.2307/1971351},
       URL = {https://doi.org/10.2307/1971351},
}

@article{Kollar1986-II,
   AUTHOR = {Koll\'ar, J\'anos},
     TITLE = {Higher direct images of dualizing sheaves. {II}},
   JOURNAL = {Ann. of Math. (2)},
  FJOURNAL = {Annals of Mathematics. Second Series},
    VOLUME = {124},
      YEAR = {1986},
    NUMBER = {1},
     PAGES = {171--202},
      ISSN = {0003-486X,1939-8980},
   MRCLASS = {14F10 (14C30 14D99 32C35)},
  MRNUMBER = {847955},
MRREVIEWER = {Takao\ Fujita},
       DOI = {10.2307/1971390},
       URL = {https://doi.org/10.2307/1971390},
}

@incollection{Reid-1987-YPG,
  AUTHOR = {Reid, Miles},
     TITLE = {Young person's guide to canonical singularities},
 BOOKTITLE = {Algebraic geometry, {B}owdoin, 1985 ({B}runswick, {M}aine,
              1985)},
    SERIES = {Proc. Sympos. Pure Math.},
    VOLUME = {46, Part 1},
     PAGES = {345--414},
 PUBLISHER = {Amer. Math. Soc., Providence, RI},
      YEAR = {1987},
      ISBN = {0-8218-1476-1},
   MRCLASS = {14E30 (14B05 14E05 14J10)},
  MRNUMBER = {927963},
MRREVIEWER = {Eckart\ Viehweg},
       DOI = {10.1090/pspum/046.1/927963},
       URL = {https://doi.org/10.1090/pspum/046.1/927963},
}

@INPROCEEDINGS{Miyaoka1987,
   AUTHOR = {Miyaoka, Yoichi},
     TITLE = {The {C}hern classes and {K}odaira dimension of a minimal
              variety},
 BOOKTITLE = {Algebraic geometry, {S}endai, 1985},
    SERIES = {Adv. Stud. Pure Math.},
    VOLUME = {10},
     PAGES = {449--476},
 PUBLISHER = {North-Holland, Amsterdam},
      YEAR = {1987},
      ISBN = {0-444-70313-6},
   MRCLASS = {14E30 (14F05 14J40)},
  MRNUMBER = {946247},
MRREVIEWER = {I.\ Dolgachev},
       DOI = {10.2969/aspm/01010449},
       URL = {https://doi.org/10.2969/aspm/01010449},
}

@incollection{Catanese1999,
   AUTHOR = {Catanese, Fabrizio},
     TITLE = {Singular bidouble covers and the construction of interesting
              algebraic surfaces},
 BOOKTITLE = {Algebraic geometry: {H}irzebruch 70 ({W}arsaw, 1998)},
    SERIES = {Contemp. Math.},
    VOLUME = {241},
     PAGES = {97--120},
 PUBLISHER = {Amer. Math. Soc., Providence, RI},
      YEAR = {1999},
      ISBN = {0-8218-1149-5},
   MRCLASS = {14J29 (14J17 14J25)},
  MRNUMBER = {1718139},
MRREVIEWER = {Olivier\ Debarre},
       DOI = {10.1090/conm/241/03630},
       URL = {https://doi.org/10.1090/conm/241/03630},
}

@incollection {KMM87,
    AUTHOR = {Kawamata, Yujiro and Matsuda, Katsumi and Matsuki, Kenji},
     TITLE = {Introduction to the minimal model problem},
 BOOKTITLE = {Algebraic geometry, {S}endai, 1985},
    SERIES = {Adv. Stud. Pure Math.},
    VOLUME = {10},
     PAGES = {283--360},
 PUBLISHER = {North-Holland, Amsterdam},
      YEAR = {1987},
      ISBN = {0-444-70313-6},
   MRCLASS = {14E30 (14E05 14J40)},
  MRNUMBER = {946243},
MRREVIEWER = {David\ R.\ Morrison},
       DOI = {10.2969/aspm/01010283},
       URL = {https://doi.org/10.2969/aspm/01010283},
}

@article {Fuj77,
    AUTHOR = {Fujita, Takao},
     TITLE = {Some remarks on {K}odaira dimensions of fiber spaces},
   JOURNAL = {Proc. Japan Acad. Ser. A Math. Sci.},
  FJOURNAL = {Japan Academy. Proceedings. Series A. Mathematical Sciences},
    VOLUME = {53},
      YEAR = {1977},
    NUMBER = {1},
     PAGES = {28--30},
      ISSN = {0386-2194},
   MRCLASS = {32L05 (32J99)},
  MRNUMBER = {481131},
MRREVIEWER = {W.\ Barth},
       URL = {http://projecteuclid.org/euclid.pja/1195518154},
}

@article {Kaw92,
    AUTHOR = {Kawamata, Yujiro},
     TITLE = {Abundance theorem for minimal threefolds},
   JOURNAL = {Invent. Math.},
  FJOURNAL = {Inventiones Mathematicae},
    VOLUME = {108},
      YEAR = {1992},
    NUMBER = {2},
     PAGES = {229--246},
      ISSN = {0020-9910,1432-1297},
   MRCLASS = {14E30 (14E05 14E35)},
  MRNUMBER = {1161091},
MRREVIEWER = {Jaros\l aw\ A.\ Wi\'sniewski},
       DOI = {10.1007/BF02100604},
       URL = {https://doi.org/10.1007/BF02100604},
}

@article {T-Z1,
    AUTHOR = {Tsunoda, Shuichiro and Zhang, De-Qi},
     TITLE = {Noether's inequality for noncomplete algebraic surfaces of
              general type},
   JOURNAL = {Publ. Res. Inst. Math. Sci.},
  FJOURNAL = {Kyoto University. Research Institute for Mathematical
              Sciences. Publications},
    VOLUME = {28},
      YEAR = {1992},
    NUMBER = {1},
     PAGES = {21--38},
      ISSN = {0034-5318,1663-4926},
   MRCLASS = {14J29 (14C20 14J10)},
  MRNUMBER = {1147849},
MRREVIEWER = {Takao\ Fujita},
       DOI = {10.2977/prims/1195168854},
       URL = {https://doi.org/10.2977/prims/1195168854},
}

@article {Z2,
    AUTHOR = {Zhang, De-Qi},
     TITLE = {Noether's inequality for noncomplete algebraic surfaces of
              general type. {II}},
   JOURNAL = {Publ. Res. Inst. Math. Sci.},
  FJOURNAL = {Kyoto University. Research Institute for Mathematical
              Sciences. Publications},
    VOLUME = {28},
      YEAR = {1992},
    NUMBER = {5},
     PAGES = {679--707},
      ISSN = {0034-5318,1663-4926},
   MRCLASS = {14J10 (14C20 14J29)},
  MRNUMBER = {1195995},
MRREVIEWER = {Takao\ Fujita},
       DOI = {10.2977/prims/1195167932},
       URL = {https://doi.org/10.2977/prims/1195167932},
}

@book {Beauville,
    AUTHOR = {Beauville, Arnaud},
     TITLE = {Complex algebraic surfaces},
    SERIES = {London Mathematical Society Student Texts},
    VOLUME = {34},
   EDITION = {Second},
      NOTE = {Translated from the 1978 French original by R. Barlow, with
              assistance from N. I. Shepherd-Barron and M. Reid},
 PUBLISHER = {Cambridge University Press, Cambridge},
      YEAR = {1996},
     PAGES = {x+132},
      ISBN = {0-521-49510-5; 0-521-49842-2},
   MRCLASS = {14Jxx (14-02)},
  MRNUMBER = {1406314},
       DOI = {10.1017/CBO9780511623936},
       URL = {https://doi.org/10.1017/CBO9780511623936},
}

@article {EXPII,
    AUTHOR = {Chen, Jungkai A. and Chen, Meng},
     TITLE = {Explicit birational geometry of $3$-folds of general type, {II}},
   JOURNAL = {J. Differential Geom.},
  FJOURNAL = {Journal of Differential Geometry},
    VOLUME = {86},
      YEAR = {2010},
    NUMBER = {2},
     PAGES = {237--271},
      ISSN = {0022-040X,1945-743X},
   MRCLASS = {14E05 (14J30)},
  MRNUMBER = {2772551},
MRREVIEWER = {Massimiliano\ Mella},
       URL = {http://projecteuclid.org/euclid.jdg/1299766788},
}

@Article{HZ,
  author   = {Hu, Yong and Zhang, Tong},
  journal  = {Math. Ann.},
  title    = {Algebraic threefolds of general type with small volume},
  year     = {2025},
  issn     = {0025-5831},
  number   = {1},
  pages    = {567--612},
  volume   = {391},
  doi      = {10.1007/s00208-024-02933-6},
  fjournal = {Mathematische Annalen},
  mrclass  = {14J30 (14E30 14J29)},
  mrnumber = {4846792},
  url      = {https://doi.org/10.1007/s00208-024-02933-6},
}

@Article{CHPZ,
  author        = {Coughlan, Stephen and Hu, Yong and Pignatelli, Roberto and Zhang, Tong },
  journal       = {arXiv: 2409.17847v2},
  title         = {Moduli spaces of threefolds on the {N}oether line},
  year          = {2024},
  archiveprefix = {arXiv},
  eprint        = {2409.17847v2},
  primaryclass  = {math.AG},
}

\end{document}